\DeclareOldFontCommand{\sc}{\normalfont\scshape}{\@nomath\sc}
\DeclarePairedDelimiter{\ceil}{\lceil}{\rceil}
\definecolor{lightgray}{gray}{0.5}
\definecolor{gray}{gray}{0.7}
\definecolor{darkgreen}{rgb}{0,0.6,0.13}
\newcommand{\nc}{\newcommand}
\nc{\dsp}{\displaystyle}
\nc{\txt}{\textstyle}
\nc{\reff}[1]{(\ref{#1})}
\nc{\mrm}[1]{\mathrm{#1}}
\nc{\udl}[1]{\underline{#1}}
\nc{\ovl}[1]{\overline{#1}}
\nc{\al}{\underline{\boldsymbol{\alpha}}}
\nc{\la}{\underline{\boldsymbol{\lambda}}}
\nc{\llbr}{\llbracket}
\nc{\rrbr}{\rrbracket}
\nc{\lbr}{\lbrack}
\nc{\rbr}{\rbrack}
\nc{\N}{\mathbb{N}}
\nc{\Z}{\mathbb{Z}}
\nc{\D}{\mathbb{D}}
\nc{\Q}{\mathbb{Q}}
\nc{\R}{\mathbb{R}}
\nc{\C}{\mathbb{C}}
\nc{\T}{\mathbb{T}}
\nc{\Stwo}{\mathbb{S}^2}
\nc{\tld}[1]{\tilde{#1}}
\nc{\wtld}[1]{\widetilde{#1}}
\nc{\hu}{\hat{u}}
\nc{\wh}[1]{\widehat{#1}}
\nc{\Fbf}{\textbf{F}}
\nc{\Gbf}{\textbf{G}}
\nc{\Lbf}{\textbf{L}}
\nc{\Nbf}{\textbf{N}}
\nc{\Ibf}{\textbf{I}}
\nc{\Dbf}{\textbf{D}} 
\nc{\Tbf}{\textbf{T}}
\nc{\Jbf}{\textbf{J}} 
\nc{\Rbf}{\textbf{R}}   
\nc{\ph}{\varphi}
\nc{\NN}{\mathcal{NN}}
\nc{\OO}{\mathcal{O}}
\nc{\sumeven}{\sum_{k=-N/2}^{N/2}{\hspace{-0.3cm}}'{\;\,}}
\nc{\sumevenk}{\sum_{k=-n/2}^{n/2}{\hspace{-0.3cm}}'{\;\,}}
\nc{\sumevenj}{\sum_{j=-m/2}^{m/2}{\hspace{-0.3cm}}'{\;\,}}
\nc{\sumodd}{\sum_{k=-\frac{N-1}{2}}^{\frac{N-1}{2}}}
\nc{\sumoddl}{\sum_{l=-\frac{N-1}{2}}^{\frac{N-1}{2}}}
\nc{\cqfd}{~\hbox{\vrule width 2.5pt depth 2.5 pt height 3.5 pt}}
\nc{\rr}[1]{\textcolor{red}{#1}}
\nc{\bb}[1]{\textcolor{blue}{#1}}
\nc{\rrr}[1]{\textcolor{red}{\textit{\textbf{new:}}\,\,#1}}
\nc{\bbb}[1]{\textcolor{blue}{\textit{\textbf{old:}}\,\,#1}}
\nc{\hm}[1]{\textcolor{red}{\textbf{HM: \textit{#1}}}}
\nc{\hz}[1]{\textcolor{blue}{\textbf{HZ: \textit{#1}}}}
\nc{\ra}[1]{}
\nc{\bs}[1]{\boldsymbol{#1}}
\nc{\wt}{\widetilde{\times}}
\nc{\wtt}{\widetilde{\varprod}}
\nc{\wpp}{\widetilde{p}}
\nc{\wff}{\tilde{f}}
\nc{\wgg}{\widetilde{g}}
\nc{\wTT}{\widetilde{T}}
\nc{\wKK}{\widetilde{K}}
\nc{\wphi}{\widetilde{\phi}}
\nc{\wPhi}{\widetilde{\Phi}}
\nc{\wpsi}{\widetilde{\psi}}
\nc{\wPsi}{\widetilde{\Psi}}
\nc{\wom}{\widetilde{\omega}}
\newtheorem{theorem}{Theorem}[section]
\newtheorem{corollary}[theorem]{Corollary}
\newtheorem{proposition}[theorem]{Proposition}
\definecolor{parulaB}{RGB}{0,113.985,188.955}
\journal{Neural Networks}
\begin{document}

\begin{frontmatter}

\title{Error bounds for deep ReLU networks using the Kolmogorov--Arnold superposition theorem}

\author[HM]{Hadrien Montanelli\corref{cor1}}
\ead{hadrien.montanelli@gmail.com}

\author[HY]{Haizhao Yang}
\ead{haizhao@nus.edu.sg}

\cortext[cor1]{Corresponding author}
\address[HM]{Department of Applied Physics and Applied Mathematics, Columbia University, New York, United States}
\address[HY]{Department of Mathematics, National University of Singapore, Singapore}

\begin{abstract}
We prove a theorem concerning the approximation of multivariate functions by deep ReLU networks, for which the curse of the dimensionality is lessened. Our theorem is based on a constructive proof of the Kolmogorov--Arnold superposition theorem, and on a subset of multivariate continuous functions whose outer superposition functions can be efficiently approximated by deep ReLU networks.
\end{abstract}

\begin{keyword}
deep ReLU networks \sep curse of dimensionality \sep approximation theory \sep Kolmogorov--Arnold superposition theorem
\end{keyword}

\end{frontmatter}

\section{Introduction}

At the second International Congress of Mathematicians in Paris 1900, Hilbert presented ten of his 23 problems, including the 13th problem about equations of degree seven. He considered the following equation,
\begin{align*}
x^7 + ax^3 + bx^2 + cx + 1 = 0,
\end{align*}
and asked whether its solution $x(a,b,c)$, seen as a function of the three parameters $a$, $b$ and $c$, could be written as the composition of functions of only two variables.

Hilbert's 13th problem was solved by Kolmogorov and his 19 years old student Arnold in a series of papers in the 1950s. Kolmogorov first proved in 1956 that any continuous function of several variables could be expressed as the composition of functions of three variables \cite{kolmogorov1956}. His student Arnold extended his theorem in 1957; three variables were reduced to two \cite{arnold1957}. Kolmogorov finally showed later that year that functions of only one variable were needed \cite{kolmogorov1957}. The latter result is known as the \textit{Kolmogorov--Arnold superposition theorem}, and states that any continuous functions $f:[0,1]^n\rightarrow\R$ can be decomposed as
\begin{align*}
f(x_1,\ldots,x_n) = \sum_{j=0}^{2n}\phi_j\left(\sum_{i=1}^n\psi_{i,j}(x_i)\right),
\end{align*}
with $2n+1$ continuous \textit{outer} functions $\phi_j:\R\rightarrow\R$ (dependent of $f$) and $2n^2+n$ continuous \textit{inner} functions $\psi_{i,j}:[0,1]\rightarrow\R$ (independent of $f$). 

The Kolmogorov--Arnold superposition theorem was further improved in the 1960s and the 1970s. Lorentz showed in 1962 that the outer functions $\phi_j$ might be chosen to be the same function $\phi$, and replaced the inner functions $\psi_{i,j}$ by $\lambda_i\psi_j$, for some positive rationally independent constants $\lambda_i\leq 1$ \cite{lorentz1962}, while Sprecher replaced the inner functions $\psi_{i,j}$ by H\"{o}lder continuous functions $x_i\mapsto\lambda^{ij}\psi(x_i+j\epsilon)$ in 1965 \cite{sprecher1965}. Two years later, Fridman demonstrated that the inner functions could be chosen to be Lipschitz continuous, but his decomposition used $2n+1$ outer functions and $2n^2+n$ inner functions \cite{fridman1967}. Finally, Sprecher provided in 1972 a decomposition with Lipschitz continuous functions $x_i\mapsto\lambda^{i-1}\psi(x_i+j\epsilon)$ \cite{sprecher1972}.

Theoretical connections with neural networks started with the work of Hecht--Nielsen in 1987 \cite{hechtnielsen1987}. He interpreted the Kolmogorov--Arnold superposition theorem as a neural network, whose activation functions were the inner and outer functions. Girosi and Poggio claimed in 1989 that his interpretation was irrelevant for two reasons; first, the inner and outer functions were highly nonsmooth (\textit{i.e.}, these were at least as difficult to approximate as $f$); second, the outer functions depended on $f$ (\textit{i.e.}, the network architecture could not be parametrized). K{\r u}rkov\'a weakened the statement of Girosi and Poggio, in the early 1990s, by giving a direct proof of the universal approximation theorem of multilayer neural networks using the Kolmogorov--Arnold superposition theorem, and by showing that the weight selection reduced to a linear regression problem \cite{kurkova1991, kurkova1992}.

Numerical implementations originated with the work of Sprecher in the mid 1990s \cite{sprecher1996, sprecher1997}, which was followed, in 2003, by the Kolmogorov's spline network of Igelnik and Parikh \cite{igelnik2003}. Braun and Griebel proposed {an algorithm to implement a constructive proof of the Kolmogorov--Arnold theorem} in 2009 \cite{braun2009}, using K\"{o}ppen's H\"{o}lder continuous inner function \cite{koppen2002}. 

Approximation theory for neural networks started with shallow networks and the 1989 universal approximation theorems of Cybenko \cite{cybenko1989} and Hornik \cite{hornik1989}. In the last few years, the attention has shifted to the approximation properties of deep ReLU networks \cite{bach2017, cohen2016, eldan2016, montanelli2019a, montanelli2019b, petersen2018, poggio2017, shaham2018, telgarsky2016, yarotsky2017, yarotsky2018, shen2019}. In particular, one of the most important theoretical problems is to determine why and when deep networks lessen or break the \textit{curse of dimensionality}, characterized by the $\OO(\epsilon^{-n})$ growth of the network size $W$ as the error $\epsilon\rightarrow0$, in dimension $n$.\footnote{We recall that $W=\OO(\epsilon^{-n})$ means that there exists $c_1(n)>0$, such that $W\leq c_1(n)\epsilon^{-n}$, for sufficiently small values of $\epsilon$. Alternatively, we shall write $\epsilon=\OO(W^{-1/n})$ when there exists $c_2(n)>0$, such that $\epsilon\leq c_2(n)W^{-1/n}$, for sufficiently large values of $W$.} {We recommend the review} \cite{poggio2017} {for a discussion about the curse of dimensionality in the context of deep network approximation.}

In this paper, we introduce a set of multivariate continuous functions for which the approximation of the outer functions by deep ReLU networks is appealing to lessen the curse of the dimensionality. 
We show that any function $f:[0,1]^n\rightarrow\R$ in this set can be approximated with error $\epsilon$ by a very deep ReLU network of depth and size\footnote{{Following Yarotsky} \cite{yarotsky2017}{, we define the \textit{depth} $L$ of a network as the number of layers, the \textit{size} $W$ as the total number of weights, and we allow connections between units in non-neighboring layers.}} $\OO\left(\epsilon^{-\log n}\right)$; the curse of dimensionality is lessened.

Before the exposition of our main result in Section~\ref{sec:theorem}, we will review a specific version of the Kolmogorov--Arnold superposition theorem in Section~\ref{sec:kast}, and show in Section~\ref{sec:innerouter} how to approximate the inner and outer functions by very deep ReLU networks.

\section{Constructive version of the Kolmogorov--Arnold superposition theorem}\label{sec:kast}

We review in this section a constructive version of the Kolmogorov--Arnold superposition theorem that goes back to Sprecher in 1996 and 1997 \cite{sprecher1996, sprecher1997}. The proof he provided at the time was not fully correct; minor modifications were made by Braun and Griebel in 2009 to complete his proof \cite[Thm.~2.1]{braun2009}, using the inner function suggested by K\"{o}ppen \cite{koppen2002}.

For any integer $n\geq2$, $m\geq 2n$ and $\gamma\geq m+2$, let 
\begin{align}\label{eq:a}
a = \frac{1}{\gamma(\gamma-1)},
\end{align}
\begin{align}\label{eq:lambda}
\lambda_1=1, \quad \lambda_i = \sum_{\ell=1}^\infty\gamma^{-(i-1)\beta_n(\ell)}, \quad 2\leq i\leq n,
\end{align}
with 
\begin{align}
\beta_n(\ell) = \frac{1-n^\ell}{1-n} = 1 + n + \ldots + n^{\ell-1},
\end{align}
and
\begin{align}\label{eq:nualpha}
\nu = 2^{-\alpha}(\gamma+3), \quad \alpha = \log_\gamma2.
\end{align}

We recall that a function $f:[a,b]\rightarrow\R$ is said to be $(\nu,\alpha)$-H\"{o}lder continuous if and only if there exist scalars $\nu>0$ and $0<\alpha\leq1$, such that $\vert f(x)-f(y)\vert\leq\nu\vert x-y\vert^\alpha$, for all $x,y\in[a,b]$. (The value $\alpha=1$ yields $\nu$-Lipschitz continuous functions.)

\begin{theorem}[Kolmogorov--Arnold superposition theorem]\label{thm:kast}
Let $n\geq2$, $m\geq2n$ and $\gamma\geq m+2$ be given integers, and let $a$, $\lambda_i$ ($1\leq i\leq n$), $\nu$ and $\alpha$ be defined as in Equations~\eqref{eq:a}--\eqref{eq:nualpha}. Then, there exists a $(\nu,\alpha)$-H\"{o}lder continuous inner function $\psi:[0,2)\rightarrow[0,2)$, such that for any continuous function $f:[0,1]^n\rightarrow\R$, there exist $m+1$ continuous outer function $\phi_j:[0,2\frac{\gamma-1}{\gamma-2})\rightarrow\R$, such that
\begin{align}\label{eq:kast}
f(x_1,\ldots,x_n) = \sum_{j=0}^{m}\phi_j\left(\sum_{i=1}^n\lambda_i\psi(x_i+ja)\right).
\end{align}
\end{theorem}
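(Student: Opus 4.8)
The statement is a specific, quantitatively explicit version of the Kolmogorov-Arnold superposition theorem. This is a known theorem (attributed to Sprecher, corrected by Braun-Griebel), so I need to reconstruct the constructive proof. Let me think about how I'd actually prove this.

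The structure is: we need (1) an inner function ψ that is Hölder continuous with the stated exponent and constant, and (2) for each f, outer functions φ_j making the decomposition hold.

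Key insight: the inner function ψ is built on a base-γ (or similar) representation of real numbers. The λ_i weights combine n copies ψ(x_i + ja) so that the combined argument θ_j = Σ λ_i ψ(x_i + ja) encodes enough information about (x_1,...,x_n) to "separate" the input space.

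Let me think about the construction of ψ. Typically ψ is defined on rationals with finite base-γ expansions (dyadic-like), mapped to a "shifted/dilated" representation, then extended by continuity (which requires checking the Hölder estimate on these dense points).

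The main mechanism: For a fixed "resolution" level, the intervals where x_i lives get mapped by the shift +ja to overlapping covers — the role of the m+1 shifts (j = 0,...,m) is that for any point x ∈ [0,1]^n, most of the j-values put all coordinates into "good" intervals (intervals away from boundaries where ψ is nicely behaved).

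The combined map (x_1,...,x_n) ↦ (θ_0, ..., θ_m) where θ_j = Σ_i λ_i ψ(x_i + ja) is injective in a strong quantitative sense on a large portion of the cube, and the λ_i being rationally independent (chosen via the β_n expansion) ensures the images Σ λ_i ψ(·) separate distinct n-tuples of intervals.

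**How I'd organize the steps.**

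First I would recall/construct the inner function ψ. The plan is to define ψ on the set of terminating base-γ rationals d_k = Σ_{r=1}^k i_r γ^{-r} via a formula that "spreads out" the digits, typically ψ(d_k) = Σ_r something like i_r γ^{-β_n(r)} or a variant, then extend to all of [0,2) by continuity. I'd verify the (ν,α)-Hölder bound — this requires estimating |ψ(x) − ψ(y)| in terms of |x−y|^α where α = log_γ 2, which comes from comparing the base-γ scale γ^{-k} of x,y-separation to the base-2 scale 2^{-k} implicit in ν.

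Second, I'd establish the separation/covering lemma: for each level k, partition [0,1] into intervals of length ~γ^{-k}, and show that for each x ∈ [0,1]^n, for all but at most one value of j ∈ {0,...,m} (this is where m ≥ 2n and the "2n+1 shifts suffice" classical counting enters), every coordinate x_i + ja lies in the interior (a "good" subinterval) of a level-k interval. On these good configurations, the map into θ_j is controlled and the λ-weighted sum uniquely identifies the n-tuple of intervals.

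Third, I'd construct the outer functions φ_j by a limiting/iterative scheme. Fix a target accuracy; define φ_j^{(0)} piecewise-constantly so that Σ_j φ_j^{(0)}(θ_j) matches f up to a fixed fraction (say (2n/(2n+1))-type contraction) of ∥f∥ on the good intervals, using that for each x at least a majority of j are good. Then iterate on the residual f − Σ_j φ_j^{(0)}(θ_j), which has smaller sup-norm by the contraction factor, producing a geometric series φ_j = Σ_s φ_j^{(s)} that converges uniformly; continuity of the φ_j follows from uniform convergence of continuous (piecewise-linear-interpolated) approximants. The domain [0, 2(γ−1)/(γ−2)) for φ_j is exactly the range of θ_j, computed from the maximal value Σ_i λ_i · sup ψ together with the geometric bound Σ λ_i ≤ (γ−1)/(γ−2).

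**Where the difficulty lies.**

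\emph{The main obstacle is the construction of the inner function ψ together with the simultaneous verification of its Hölder continuity and the separation property.} These two demands pull in opposite directions: Hölder continuity wants ψ to not vary too fast, while separation wants the images Σ_i λ_i ψ(x_i + ja) of distinct interval-tuples to be well-separated (so the outer functions can distinguish them). Reconciling them forces the precise arithmetic choices in \eqref{eq:a}-\eqref{eq:nualpha} — the exponent α = log_γ 2, the dilation constants λ_i defined through β_n, and the shift a = 1/(γ(γ−1)). The delicate point is that ψ is only defined cleanly on terminating base-γ expansions and must be extended by continuity; one must show the extension is well-defined and that the Hölder estimate, verified on the dense set of such rationals, survives the limit. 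I would therefore spend most of the effort on the ψ-construction and its Hölder bound, treating the covering lemma and the geometric-series construction of the φ_j as the more routine (though still careful) consequences, and I expect to follow the corrected argument of Braun and Griebel \cite{braun2009} rather closely at the technical points where Sprecher's original was incomplete.
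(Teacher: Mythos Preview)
Your proposal is correct and follows essentially the same route as the paper, which reviews the Sprecher/Braun--Griebel constructive proof in two steps: the inner function $\psi$ is defined recursively on base-$\gamma$ grids and extended by continuity with the stated H\"older bound (Propositions~\ref{prop:innercon}--\ref{prop:innerhold}), and the outer functions $\phi_j$ are built by an iterative scheme with a geometric contraction factor $\eta$ on the residual (Proposition~\ref{prop:outercon}), exactly as you outline. The only slip is in your covering count---at most $n$, not one, of the $m+1$ shifts $j$ can be bad for a given $x\in[0,1]^n$ (each coordinate can fail for at most one $j$), which is precisely why $m\geq 2n$ is required---but this does not affect the structure of your argument.
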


Let us now go through the main two steps of the proof of Theorem~\ref{thm:kast}; for details, see \cite{sprecher1996, sprecher1997, braun2009}.

The first step is the building of the inner function $\psi$, which involves uniform grids $D_k$ with step sizes $\gamma^{-k}$,
\begin{align*}
D_k = \{i\gamma^{-k},\,0\leq i\leq\gamma^{k}-1\}\subset[0,1).
\end{align*}
There are $\gamma^k$ different points $0\leq d\leq1-\gamma^{-k}<1$ on each grid $D_k$, and each point $d$ on $D_k$ is represented in base $\gamma$ as follows,
\begin{align*}
d = \sum_{\ell=1}^k i_\ell\gamma^{-\ell}, \quad i_\ell\in\{0,1,\ldots,\gamma-1\}.
\end{align*}

\begin{figure}[t]
\centering
\includegraphics[scale=.35]{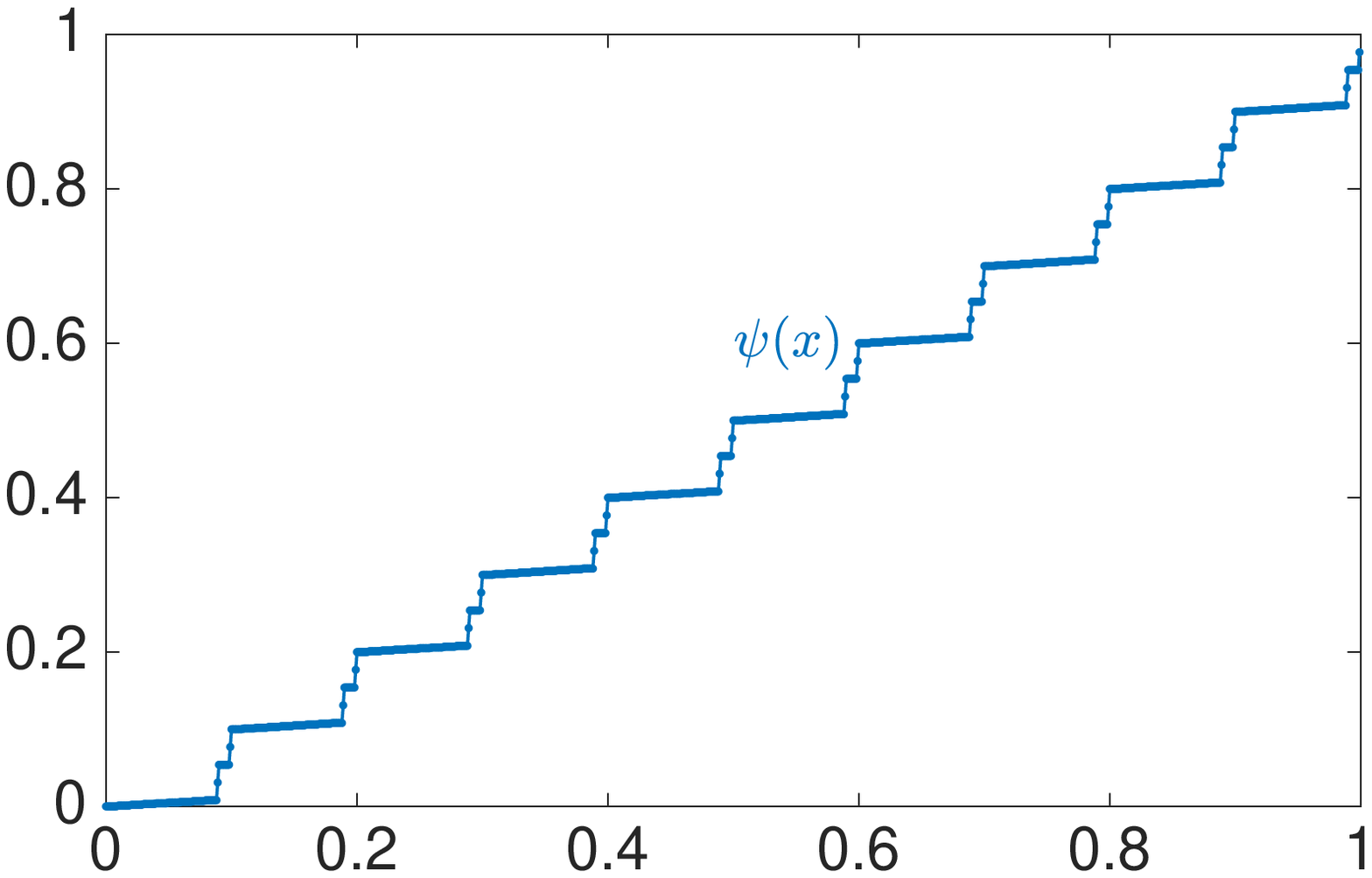} 
\includegraphics[scale=.35]{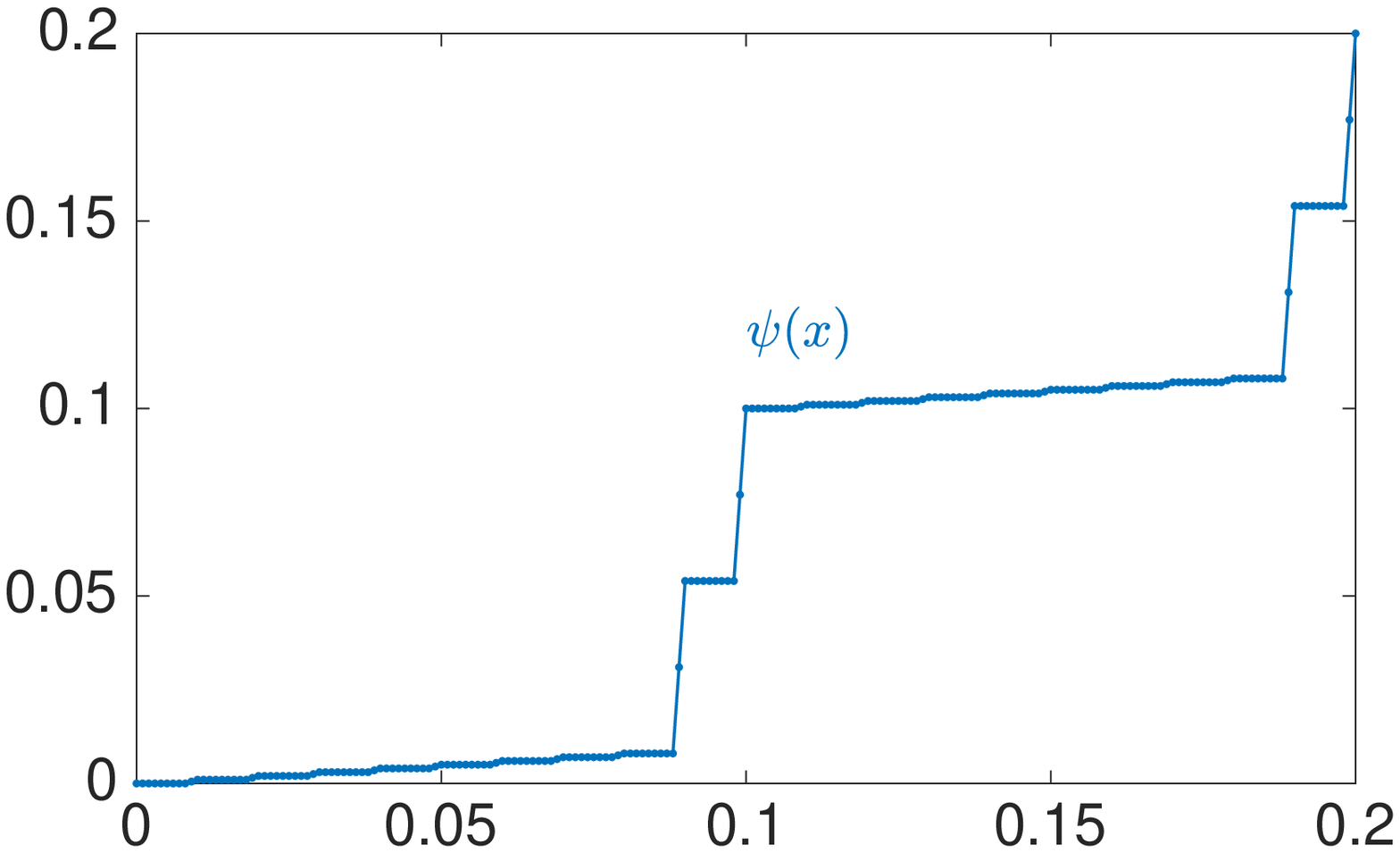} 
\caption{\textit{Plot of the inner function $\psi$ evaluated on the grid $D_3$ with $n=2$ and $\gamma=10$ (top). 
The second row is a zoomed plot that reveals the self-similarity of the graph of $\psi$ as $k\rightarrow\infty$.}}
\label{fig:psi}
\end{figure}

\begin{proposition}[Construction of the inner function]\label{prop:innercon}
The inner function $\psi$ is first defined at grid points $d\in D_k$ via $\psi(d)=\psi_k(d)$ for all integers $k\geq1$, where the functions $\psi_k$ are recursively defined by
$$
\psi_k(d) =
\left\{
\begin{array}{ll}
\hspace{-.15cm}d, & \hspace{-.15cm}d\in D_1, \\
\hspace{-.15cm}\psi_{k-1}\left(d-i_k\gamma^{-k}\right)\,+ \\
\quad\hspace{-.15cm}i_k\gamma^{-\beta_n(k)}, & \hspace{-.15cm}d\in D_k,\,k>1,\,i_k<\gamma-1, \\
\hspace{-.15cm}\frac{1}{2}\Big[\psi_k\left(d-\gamma^{-k}\right)\,+ \\
\quad\hspace{-.15cm}\psi_{k-1}\left(d+\gamma^{-k}\right)\Big], & \hspace{-.15cm}d\in D_k,\,k>1,\,i_k=\gamma-1.
\end{array}
\right.
$$

The function $\psi$ is then defined at any $x\in[0,1)$ via\footnote{{The existence of the limit is based on a suitably defined Cauchy sequence; see} \cite[Lem.~2.3]{braun2009} {for details.}}
\begin{align*}
\psi(x) = \lim_{k\rightarrow\infty}\psi_k\left(\sum_{\ell=1}^ki_\ell\gamma^{-\ell}\right),
\end{align*}
since each $x\in[0,1)$ has the representation
\begin{align*}
x = \sum_{\ell=1}^\infty i_\ell\gamma^{-\ell} = \lim_{k\rightarrow\infty}\sum_{\ell=1}^ki_\ell\gamma^{-\ell}.
\end{align*}
Finally, the inner function is extended to $x\in[1,2)$ by
\begin{align*}
\psi(x) = \psi(x-1) + 1.
\end{align*}
The resulting function has domain and range $[0,2)$.
\end{proposition}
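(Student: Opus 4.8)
The plan is to read this proposition as a \textbf{well-definedness} statement: since $\psi$ is specified first by a recursion on a family of grids and then by a limit, the proof must verify three things, namely that the value $\psi(d)=\psi_k(d)$ does not depend on which grid $D_k$ is used to reach the grid point $d$, that the limit defining $\psi$ on $[0,1)$ converges, and that the resulting image lies in the claimed interval. The constructions of Sprecher \cite{sprecher1996, sprecher1997} and the corrections of Braun and Griebel \cite{braun2009} supply the technical core, so the task is largely to organize and check their estimates.

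First I would record that the grids are nested, $D_k\subset D_{k+1}$, since a point $d=\sum_{\ell=1}^k i_\ell\gamma^{-\ell}\in D_k$ is also represented on $D_{k+1}$ with trailing digit $i_{k+1}=0$. Feeding $i_{k+1}=0<\gamma-1$ into the middle branch of the recursion gives $\psi_{k+1}(d)=\psi_k(d)+0=\psi_k(d)$, so by induction $\psi_{k'}(d)=\psi_k(d)$ for every $k'\geq k$; hence $\psi(d):=\psi_k(d)$ is unambiguous once each $\psi_k$ is itself well-defined. For the latter I would check that every argument on the right-hand side of the recursion lands on an admissible grid: in the generic branch $d-i_k\gamma^{-k}\in D_{k-1}$, while in the exceptional branch $i_k=\gamma-1$ the point $d-\gamma^{-k}$ stays in $D_k$ with a strictly smaller last digit (so it is reached through the middle branch) and $d+\gamma^{-k}$ carries up to a point of $D_{k-1}$. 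This makes the recursion well-founded and also settles the double-representation ambiguity of base-$\gamma$ expansions at grid points.

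Next I would establish convergence and the range bound together. For $x=\sum_{\ell=1}^\infty i_\ell\gamma^{-\ell}\in[0,1)$ set $x_k=\sum_{\ell=1}^k i_\ell\gamma^{-\ell}$. In the generic branch the recursion telescopes to $\psi_k(x_k)=\sum_{\ell=1}^k i_\ell\gamma^{-\beta_n(\ell)}$, so that $\vert\psi_{k+1}(x_{k+1})-\psi_k(x_k)\vert=i_{k+1}\gamma^{-\beta_n(k+1)}\leq(\gamma-1)\gamma^{-\beta_n(k+1)}$. Because $\beta_n(\ell)=1+n+\cdots+n^{\ell-1}$ grows like $n^{\ell-1}$, these increments are summable, the sequence $(\psi_k(x_k))_k$ is Cauchy, and its limit defines $\psi(x)$, in agreement with \cite[Lem.~2.3]{braun2009}. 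The same telescoped formula yields $0\leq\psi(x)\leq(\gamma-1)\sum_{\ell\geq1}\gamma^{-\beta_n(\ell)}$, and since $\beta_n(\ell)\geq(1+n)+(\ell-2)$ for $\ell\geq2$ this bound simplifies to at most $(1-\gamma^{-1})+\gamma^{-n}<1$ for the admissible $\gamma$. Thus $\psi$ maps $[0,1)$ into $[0,1)$, and the extension $\psi(x)=\psi(x-1)+1$ carries $[1,2)$ into $[1,2)$, giving domain and range $[0,2)$.

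The \textbf{main obstacle} is the exceptional branch $i_k=\gamma-1$. This is precisely the case that breaks the clean telescoping formula, forces the averaging that produces the self-similar graph of Figure~\ref{fig:psi}, and requires the more delicate Cauchy estimate of Braun and Griebel. Rather than reprove it, I would invoke \cite{braun2009} for this branch and restrict my own verification to confirming that their estimate remains compatible with the grid bookkeeping and the range bound established above.
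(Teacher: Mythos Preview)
The paper does not actually supply a proof of this proposition; it treats it as a construction, with only a footnote pointing to \cite[Lem.~2.3]{braun2009} for the Cauchy-sequence argument underlying the limit. Your proposal is correct and strictly more informative than the paper's treatment: you make explicit the grid-consistency check, the telescoped formula in the generic case, and the range estimate, while---like the paper---deferring the delicate exceptional-digit case $i_k=\gamma-1$ to Braun and Griebel.
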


For points $d=\sum_{\ell=1}^ki_\ell\gamma^{-\ell}\in D_k$ whose indices $i_\ell$ are all strictly smaller than $\gamma-1$, it is easy to show, by induction, that
\begin{align*}
\psi(d) = \sum_{\ell=1}^ki_\ell\gamma^{-\beta_n(\ell)}.
\end{align*}
For other points, the right-hand side in the equation above is only a lower bound.

The inner function constructed in Proposition~\ref{prop:innercon} was introduced by K\"oppen in 2002 \cite{koppen2002}. It is H\"{o}lder continuous, a result that can be proved using the techniques introduced by Sprecher in his 1965 paper \cite{sprecher1965}.

\begin{proposition}[H\"{o}lder continuity of the inner function]\label{prop:innerhold}
The inner function $\psi$ of Proposition~\ref{prop:innercon} is $(\nu,\alpha)$-H\"{o}lder continuous with $\nu=2^{-\alpha}(\gamma+3)$ and $\alpha=\log_\gamma2$.
\end{proposition}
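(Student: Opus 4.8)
The plan is to establish $|\psi(x)-\psi(y)|\le\nu|x-y|^\alpha$ by a sequence of reductions followed by an inductive core estimate, exploiting the self-similar recursion of Proposition~\ref{prop:innercon}. First I would reduce to the unit interval: since $\psi(x)=\psi(x-1)+1$ on $[1,2)$, a pair $x,y$ lying in the same half inherits the bound from $[0,1)$, while a pair straddling $x=1$ is handled by splitting the difference at $1$, once one verifies $\lim_{x\to1^-}\psi(x)=1=\psi(1)$ so that no jump is introduced at the seam. Second, because $\bigcup_k D_k$ is dense in $[0,1)$ and $\psi$ is continuous (the limit defining it exists), it suffices to prove the inequality for grid points $x,y\in D_k$ and pass to the limit $k\to\infty$.

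The core is a bound on the single-step increment $\Delta_k:=\max_{d\in D_k}|\psi(d+\gamma^{-k})-\psi(d)|$, which I would prove by induction on $k$ directly from the recursion for $\psi_k$. Adding one unit $\gamma^{-k}$ to $d$ either leaves the leading digits untouched — the no-carry branch $i_k<\gamma-1$, where the increment is exactly $\gamma^{-\beta_n(k)}$, negligibly small since $\beta_n(k)$ grows like $n^{k}$ — or it triggers a carry, i.e. the transition branch $i_k=\gamma-1$ in which the averaging $\tfrac12[\psi_k(d-\gamma^{-k})+\psi_{k-1}(d+\gamma^{-k})]$ governs the behavior. It is precisely this factor $\tfrac12$ that halves the increment at each level, producing the geometric decay $\Delta_k\lesssim 2^{-k}$; and since $2^{-k}=\gamma^{-k\alpha}=(\gamma^{-k})^\alpha$ by the choice $\alpha=\log_\gamma2$, this is exactly the Hölder scaling. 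The halving is the analytic content of the self-similarity visible in Figure~\ref{fig:psi}.

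Finally, for general grid points $x<y$ I would select the scale $k$ with $\gamma^{-(k+1)}\le y-x<\gamma^{-k}$, so that $x$ and $y$ sit in at most two adjacent level-$k$ cells; bounding $|\psi(x)-\psi(y)|$ by summing the oscillations over all finer levels (a convergent geometric series controlled by the $\Delta_j$, $j\ge k$, together with the single coarse crossing between the two cells) gives $|\psi(x)-\psi(y)|\le C\,2^{-k}$, after which $2^{-k}\le(\gamma|x-y|)^\alpha=2|x-y|^\alpha$ converts the estimate into the required form.

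I expect the main obstacle to be twofold. The delicate point is the carry branch: one must control not just one transition subcell but the nested transitions occurring when $d$ ends in a run of digits equal to $\gamma-1$, which forces one to iterate the averaging estimate and check that the accumulated small increments remain dominated by the leading $2^{-k}$ term. The second difficulty is bookkeeping, but essential here, because the statement asserts the specific constant $\nu=2^{-\alpha}(\gamma+3)$: the constants must be tracked through the geometric sums and the choice of scale $k$ so that the factors $\gamma+3$ and $2^{-\alpha}$ emerge exactly, rather than as an unspecified $C(\gamma)$. For both points I would follow the increment technique of Sprecher~\cite{sprecher1965}, adapted to Köppen's inner function.
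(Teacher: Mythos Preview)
Your proposal is correct and aligns with the paper's approach: the paper's proof consists solely of the citation ``See \cite[Sec.~4]{sprecher1965}'', deferring entirely to Sprecher's original increment technique, which is precisely what you outline in detail (reduction to grid points, inductive control of single-step increments via the halving in the carry branch, and the conversion $2^{-k}=(\gamma^{-k})^\alpha$). Your sketch in fact supplies substantially more than the paper itself does.
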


\begin{proof}
See \cite[Sec.~4]{sprecher1965}.
\end{proof}

We plot in Figure~\ref{fig:psi} the graph of the function $\psi$ evaluated on the grid $D_3$ for $n=2$ and $\gamma=10$. As $k\rightarrow\infty$, the graph of $\psi$ exhibits self-similarity, which is expected since $\psi$ is merely H\"{o}lder continuous.

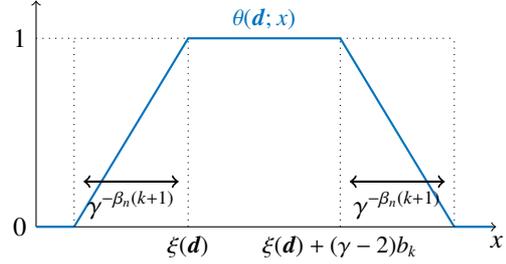
\begin{figure}[t]
\centering
\begin{tikzpicture}

\draw[->] (-0.5,0) -- (5.55,0) node[anchor=north] {$\small{x}$};
\draw (1.5,0) node[anchor=north] {$\small{\xi(\bs{d})}$};
\draw (3.5,0) node[anchor=north] {$\small{\xi(\bs{d})+(\gamma-2)b_k}$};

\draw[->] (-0.5,0) -- (-0.5,3) node[anchor=east] {};
\draw (-0.5,0) node[anchor=east] {$\small{0}$};
\draw (-0.5,2.5) node[anchor=east] {$\small{1}$};

\draw[dotted] (-0.5,2.5) -- (1.5,2.5);
\draw[dotted] (3.5,2.5) -- (5,2.5);
\draw[dotted] (1.5,0) -- (1.5,2.5);
\draw[dotted] (3.5,0) -- (3.5,2.5);
\draw[thick,color=parulaB] (-0.5,0) -- (0,0) -- (1.5,2.5) -- (3.5,2.5) -- (5,0) -- (5.5,0);
\draw[color=parulaB] (2.5,2.75) node {$\small{\theta(\bs{d};x)}$};

\node[draw=none] (ghost1) at (0, 0.6) {};
\node[draw=none] (ghost2) at (1.5, 0.6) {};
\draw [thick, <->] (ghost1) -- node[below]{$\small{\gamma^{-\beta_n(k+1)}}$} (ghost2);
\draw[dotted] (0,0) -- (0,2.5);
\node[draw=none] (ghost3) at (3.5, 0.6) {};
\node[draw=none] (ghost4) at (5, 0.6) {};
\draw [thick, <->] (ghost3) -- node[below]{$\small{\gamma^{-\beta_n(k+1)}}$} (ghost4);
\draw[dotted] (5,0) -- (5,2.5);

\end{tikzpicture}
\caption{\textit{For each $\bs{d}\in (D_k^j)^n$, the function $\theta(\bs{d};\cdot)$ is compactly supported and piecewise linear with slope $\pm\gamma^{\beta_n(k+1)}$. Therefore, it is $\nu$-Lipschitz continuous with $\nu=\gamma^{\beta_n(k+1)}$.}}
\label{fig:theta}
\end{figure}

The second step of the proof is the iterative construction of the outer functions $\phi_j$. For each $0\leq j\leq m$, let $D^j_k$ denote the shifted grid defined by
\begin{align*}
D_k^j & = D_k + j\sum_{\ell=2}^k\gamma^{-\ell}, \quad 0\leq j\leq m.
\end{align*}
Let $(D_k)^n$ and $(D_k^j)^n$ denote the Cartesian products of $n$ copies of $D_k$ and $D_k^j$, and let
\begin{align*}
\xi(\bs{d}) = \sum_{i=1}^n\lambda_i\psi(d_i), \quad \bs{d}=(d_1,\ldots,d_n)\in(D_k^j)^n,
\end{align*}
and
\begin{align*}
b_k = \left(\sum_{\ell=k+1}^\infty\gamma^{-\beta_n(\ell)}\right)\left(\sum_{i=1}^n\lambda_i\right).
\end{align*}

Finally, for each $\bs{d}\in(D_k^j)^n$, let $\theta:x\mapsto\theta(\bs{d};x)$ denote the function defined by
\begin{align*}
& \theta(\bs{d};x) = \sigma\left(\gamma^{\beta_n(k+1)}\left[x - \xi(\bs{d})\right] + 1\right) \\
& \quad\quad\quad\quad - \sigma\left(\gamma^{\beta_n(k+1)}\left[x - \xi(\bs{d}) - (\gamma - 2)b_k\right]\right),
\end{align*}
where $\sigma:\R\rightarrow[0,1]$ is the piecewise linear function satisfying $\sigma(x)=0$ for $x\leq0$, $\sigma(x)=x$ for $0\leq x\leq1$, and $\sigma(x)=1$ for $x\geq1$. For given $k\geq 1$ and $0\leq j\leq m$, the $\gamma^{nk}$ functions $\theta(\bs{d};\cdot)$ have disjoint supports, and are $\nu$-Lipschitz with $\nu=\gamma^{\beta_n(k+1)}$; see Figure~\ref{fig:theta}.

\begin{proposition}[Construction of the outer functions]\label{prop:outercon}
Let $\delta$ and $\eta$ be two scalars that verify
\begin{align*}
0<\delta<1-\frac{n}{n-m+1}
\end{align*}
and
\begin{align*}
0<\frac{m-n+1}{n+1}\delta + \frac{2n}{m+1}\leq\eta<1,
\end{align*}
and $f:[0,1]^n\rightarrow\R$ be a continuous function.

Starting with $f_0=0$ and $e_0=f-f_0=f$, the approximate outer function $\phi_j^r$ at iteration $r\geq1$ are defined, for each $0\leq j\leq m$, as
\begin{align*}
\phi_j^r(x) = \frac{1}{m+1}\sum_{\ell=1}^r\sum_{\bs{d}\in (D_{k_\ell})^n} e_{\ell-1}(\bs{d})\theta\left(\bs{d}+j\sum_{i=2}^{k_\ell}\gamma^{-i};x\right),
\end{align*}
for some $k_r=k_r(f)$ chosen such that $\Vert\bs{x}-\bs{x}'\Vert_\infty\leq\gamma^{-k_r}$ implies $\vert e_{r-1}(\bs{x}) - e_{r-1}(\bs{x}')\vert\leq\delta\Vert e_{r-1}\Vert_{L^\infty([0,1]^n)}$.

This yields an approximate function $f_r$,
\begin{align}\label{eq:f_r}
f_r(x_1,\ldots,x_n) = \sum_{j=0}^m\phi_j^r\left(\sum_{i=1}^n\lambda_i\psi(x_i+ja)\right),
\end{align}
and its error $e_r = f - f_r$, with
\begin{align}\label{eq:e_r}
\Vert e_r\Vert_{L^\infty([0,1]^n)}\leq\eta^{r}\Vert f\Vert_{L^\infty([0,1]^n)}.
\end{align}

Taking the limit $r\rightarrow\infty$ yields
\begin{align*}
f(x_1,\ldots,x_n) = \sum_{j=0}^m\phi_j\left(\sum_{i=1}^n\lambda_i\psi(x_i+ja)\right),
\end{align*}
where $\phi_j = \lim_{r\rightarrow\infty}\phi_j^r$.\footnote{{The existence of the limits as $r\rightarrow\infty$ relies on $\phi_j^r$ being bounded and} Equation~\eqref{eq:e_r}{; see} \cite[Cor.~3.9]{braun2009} {for details.}}
\end{proposition}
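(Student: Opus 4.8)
The plan is to prove the proposition by induction on $r$, with the geometric decay estimate \eqref{eq:e_r} as the load-bearing statement; the convergence claims will then follow by a routine limiting argument. First I would fix $r\geq1$ and a point $\bs{x}=(x_1,\ldots,x_n)\in[0,1]^n$, and abbreviate $\xi_j(\bs{x})=\sum_{i=1}^n\lambda_i\psi(x_i+ja)$ for the argument fed to the $j$-th outer function. Since $f_r$ differs from $f_{r-1}$ only through the $\ell=r$ term, I would write $f_r(\bs{x})-f_{r-1}(\bs{x})=\frac{1}{m+1}\sum_{j=0}^m\sum_{\bs{d}\in(D_{k_r})^n}e_{r-1}(\bs{d})\,\theta(\bs{d}+j\sum_{i=2}^{k_r}\gamma^{-i};\xi_j(\bs{x}))$. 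Because the $\theta(\cdot\,;\cdot)$ associated with a fixed $j$ and $k_r$ have pairwise disjoint supports (Figure~\ref{fig:theta}), the inner sum collapses to the single active grid point $\bs{d}_j=\bs{d}_j(\bs{x})$ whose support contains $\xi_j(\bs{x})$, carrying weight $\theta_j:=\theta(\bs{d}_j+\cdots;\xi_j(\bs{x}))\in[0,1]$. The whole estimate then hinges on counting, for a given $\bs{x}$, how many indices $j$ make $\xi_j(\bs{x})$ fall on the plateau where $\theta_j=1$, and on controlling how far the active $\bs{d}_j$ is from $\bs{x}$ in that case.

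The hard part will be this combinatorial covering count. Using the arithmetic of the shifts $ja=j\sum_{\ell\geq2}\gamma^{-\ell}$ together with the base-$\gamma$ digit coding realized by $\psi$ (Propositions~\ref{prop:innercon}--\ref{prop:innerhold}) and the shifted grids $D^j_{k_r}$, I would argue that for each $\bs{x}$ all but a number of indices $j\in\{0,\ldots,m\}$ proportional to $n$ are \emph{good}, meaning $\theta_j=1$ and $\Vert\bs{x}-\bs{d}_j\Vert_\infty\leq\gamma^{-k_r}$. The remaining \emph{bad} indices are exactly those for which some coordinate $x_i+ja$ lands in a transition gap, so that $\theta_j$ may be anywhere in $[0,1]$; the point of the specific shift $a=1/(\gamma(\gamma-1))$ is to force the number of such collisions to stay tied to the $n$ coordinates rather than growing with $m$, which is where the $\tfrac{2n}{m+1}$ term originates.

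Granting the covering count, I would obtain \eqref{eq:e_r} by splitting the increment into its good and bad contributions. On good indices the defining property of $k_r$ (namely $\Vert\bs{x}-\bs{x}'\Vert_\infty\leq\gamma^{-k_r}$ implies $|e_{r-1}(\bs{x})-e_{r-1}(\bs{x}')|\leq\delta\Vert e_{r-1}\Vert_{L^\infty([0,1]^n)}$) gives $e_{r-1}(\bs{d}_j)=e_{r-1}(\bs{x})+\OO(\delta\Vert e_{r-1}\Vert_{L^\infty([0,1]^n)})$, so these terms reproduce the fraction $|G|/(m+1)$ of $e_{r-1}(\bs{x})$ up to an error of order $\delta\Vert e_{r-1}\Vert_{L^\infty([0,1]^n)}$; on the $\OO(n)$ bad indices I would retain only the crude bound $|e_{r-1}(\bs{d}_j)\theta_j|\leq\Vert e_{r-1}\Vert_{L^\infty([0,1]^n)}$. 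Writing $e_r(\bs{x})=e_{r-1}(\bs{x})-(f_r-f_{r-1})(\bs{x})$ and collecting terms should yield $|e_r(\bs{x})|\leq\big(\tfrac{m-n+1}{n+1}\delta+\tfrac{2n}{m+1}\big)\Vert e_{r-1}\Vert_{L^\infty([0,1]^n)}\leq\eta\Vert e_{r-1}\Vert_{L^\infty([0,1]^n)}$; the admissible ranges imposed on $\delta$ and $\eta$ are engineered precisely so that this contraction factor is strictly below $1$. Taking the supremum over $\bs{x}$ and iterating the bound then delivers \eqref{eq:e_r}.

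The limiting statement is the easy end. From $\Vert e_r\Vert_{L^\infty([0,1]^n)}\leq\eta^r\Vert f\Vert_{L^\infty([0,1]^n)}\to0$ I get $f_r\to f$ uniformly. For each $j$, disjointness of the supports of the $\theta$'s bounds each increment by $\Vert\phi_j^r-\phi_j^{r-1}\Vert_{L^\infty}\leq\frac{1}{m+1}\Vert e_{r-1}\Vert_{L^\infty([0,1]^n)}\leq\frac{\eta^{r-1}}{m+1}\Vert f\Vert_{L^\infty([0,1]^n)}$, so the telescoping series defining $\phi_j^r$ converges uniformly; its limit $\phi_j$ is continuous as a uniform limit of continuous functions, has range inside $[0,2\tfrac{\gamma-1}{\gamma-2})$, and passing to the limit in \eqref{eq:f_r} produces the stated decomposition.
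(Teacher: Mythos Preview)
The paper does not actually prove this proposition: it is presented as part of a review of the constructive Kolmogorov--Arnold theorem, with the details deferred to Sprecher \cite{sprecher1996,sprecher1997} and Braun--Griebel \cite{braun2009} (see the sentence preceding Proposition~\ref{prop:innercon} and the footnote to Proposition~\ref{prop:outercon}). So there is no ``paper's own proof'' to compare against; the only thing to assess is whether your sketch reproduces the argument in those references.

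It does, in outline. The structure you describe --- write $f_r-f_{r-1}$ as the $\ell=r$ layer, collapse each inner sum to a single active $\bs d_j$ by disjointness of the supports of the $\theta$'s, split the indices $j$ into ``good'' (plateau, $\theta_j=1$, and $\Vert\bs x-\bs d_j\Vert_\infty\le\gamma^{-k_r}$) versus ``bad'', and then combine the modulus-of-continuity bound on the good part with the crude bound on the bad part --- is exactly the Sprecher/Braun--Griebel mechanism. You correctly identify the combinatorial covering lemma as the crux: the shifts $ja$ with $a=1/(\gamma(\gamma-1))$ are designed so that, for every $\bs x$ and every scale $k$, at most $n$ of the $m+1$ shifted grids place some coordinate $x_i+ja$ in a transition gap; this is precisely \cite[Lem.~3.5]{braun2009}. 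Your limiting paragraph is also the right argument and matches what the footnote points to.

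Two small cautions if you flesh this out. First, be precise about the count: the references show that at least $m-n+1$ indices are good (not merely ``all but $\OO(n)$''), and this exact number is what produces the constants in the contraction factor. Second, in your accounting of $e_r(\bs x)=e_{r-1}(\bs x)-(f_r-f_{r-1})(\bs x)$, the leftover piece of $e_{r-1}(\bs x)$ coming from the missing fraction $1-|G|/(m+1)$ and the crude bound on the bad indices \emph{both} contribute to the $2n/(m+1)$ term, while the $\delta$ term carries the coefficient $|G|/(m+1)$; tracking this carefully is what makes the displayed inequality for $\eta$ come out as stated (modulo what appears to be a typographical slip in the denominators in the paper's hypotheses).
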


The approximate outer functions $\phi_j^r$ of Proposition~\ref{prop:outercon} are Lipschitz continuous, as we shall prove next.

\begin{proposition}[Lipschitz continuity of the outer functions]\label{prop:outerlip}
For all $r\geq1$ and $0\leq j\leq m$, the outer functions $\phi_j^r$ of Proposition~\ref{prop:outercon} have domain $[0,2\frac{\gamma-1}{\gamma-2})$, and are $\nu_r(f)$-Lipschitz continuous with
\begin{align}\label{eq:nu_r}
\nu_r(f) = \frac{\Vert f\Vert_{L^\infty([0,1]^n)}}{m+1}\sum_{\ell=1}^r\eta^{\ell-1}\gamma^{\beta_n(k_\ell(f)+1)}.
\end{align}
\end{proposition}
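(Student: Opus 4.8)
The plan is to establish the two claims—the domain and the Lipschitz bound—separately, treating each outer function $\phi_j^r$ for a fixed index $0\le j\le m$.

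For the domain, I would observe that $\phi_j^r$ is only ever evaluated at arguments of the form $\xi=\sum_{i=1}^n\lambda_i\psi(x_i+ja)$, so it suffices to bound the range of $\xi$. Since Proposition~\ref{prop:innercon} guarantees that $\psi$ takes values in $[0,2)$, one has $\xi\in[0,2\sum_{i=1}^n\lambda_i)$, and the task reduces to the estimate $\sum_{i=1}^n\lambda_i\le\frac{\gamma-1}{\gamma-2}$. To get this, I would use that $\beta_n(\ell)\ge\ell$ (each of the $\ell$ summands defining $\beta_n(\ell)$ is at least $1$), whence $\lambda_i=\sum_{\ell\ge1}\gamma^{-(i-1)\beta_n(\ell)}\le\sum_{\ell\ge1}\gamma^{-(i-1)\ell}$ is dominated by a geometric series for each $i\ge2$. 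Summing these geometric bounds over $2\le i\le n$, adding $\lambda_1=1$, and using $\gamma\ge m+2\ge 2n+2$ to keep all denominators positive, one is left with the elementary inequality $\frac{\gamma}{(\gamma-1)^2}\le\frac{1}{\gamma-2}$, which is equivalent to $0\le1$; this closes the domain claim.

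For the Lipschitz bound, I would first split the definition of $\phi_j^r$ into its $r$ iteration levels, writing $\phi_j^r=\frac{1}{m+1}\sum_{\ell=1}^r g_\ell$ with $g_\ell(x)=\sum_{\bs{d}\in(D_{k_\ell})^n}e_{\ell-1}(\bs{d})\,\theta(\bs{d}+j\sum_{i=2}^{k_\ell}\gamma^{-i};x)$. The crucial structural input, already recorded in the excerpt, is that for fixed $\ell$ and $j$ the bumps $\theta(\cdot;x)$ appearing in $g_\ell$ have pairwise disjoint supports and are each $\gamma^{\beta_n(k_\ell+1)}$-Lipschitz. Because each bump is continuous, compactly supported, and piecewise linear, vanishing at the boundary of its support, the sum $g_\ell$ is itself continuous and piecewise linear; its slope is zero off all supports and equals $\pm e_{\ell-1}(\bs{d})\gamma^{\beta_n(k_\ell+1)}$ on the support of a single bump. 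Hence the Lipschitz constant of $g_\ell$, being the maximal absolute slope, is at most $\max_{\bs{d}}|e_{\ell-1}(\bs{d})|\,\gamma^{\beta_n(k_\ell+1)}\le\Vert e_{\ell-1}\Vert_{L^\infty([0,1]^n)}\,\gamma^{\beta_n(k_\ell+1)}$.

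Finally, I would add the per-level estimates using subadditivity of Lipschitz constants under sums, obtaining $\mathrm{Lip}(\phi_j^r)\le\frac{1}{m+1}\sum_{\ell=1}^r\Vert e_{\ell-1}\Vert_{L^\infty([0,1]^n)}\gamma^{\beta_n(k_\ell+1)}$, and then insert the geometric decay of the residuals from Equation~\eqref{eq:e_r}, namely $\Vert e_{\ell-1}\Vert_{L^\infty([0,1]^n)}\le\eta^{\ell-1}\Vert f\Vert_{L^\infty([0,1]^n)}$, which reproduces exactly the claimed constant $\nu_r(f)$. I expect the main obstacle to be the Lipschitz step rather than the routine series manipulation of the domain: the key—and easy to get wrong—point is that summing disjointly supported bumps yields the \emph{maximum} of the individual Lipschitz constants rather than their sum, so one must genuinely invoke disjointness of supports (together with the vanishing of each bump at its support endpoints) \emph{before} any constants are combined, and only afterwards sum over the iteration index $\ell$.
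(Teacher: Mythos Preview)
Your proposal is correct and follows essentially the same approach as the paper's proof. The only cosmetic difference is in the domain estimate: the paper sums over $i$ first to obtain $\sum_i\lambda_i<1+\sum_{\ell\ge1}(\gamma^{\beta_n(\ell)}-1)^{-1}$ and then bounds this by $\frac{\gamma-1}{\gamma-2}$, whereas you sum over $\ell$ first via $\beta_n(\ell)\ge\ell$; both routes are elementary and lead to the same conclusion. Your Lipschitz argument is exactly the paper's, just spelled out in more detail---in particular, your emphasis that disjoint supports (together with each bump vanishing at the endpoints of its support) turn the sum over $\bs{d}$ into a \emph{maximum} of Lipschitz constants is precisely the point the paper's one-line proof is relying on.
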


\begin{proof}
To prove that the domain is $[0,2\frac{\gamma-1}{\gamma-2})$, we use the fact that $\vert\psi(x)\vert<2$ for all $x\in[0,2)$, and
\begin{align*}
\sum_{i=1}^n\lambda_i & < 1 + \frac{1}{\gamma-1} + \frac{1}{\gamma^{1+n}-1} + \frac{1}{\gamma^{1+n+n^2}-1} + \ldots, \\
& < \frac{\gamma-1}{\gamma-2}.
\end{align*}

For the Lipschitz constant, we recall that, for given $k_\ell(f)$ and $j$, the functions $x\mapsto\theta(\bs{d};x)$, $\bs{d}\in(D_{k_\ell}^j)^n$, have disjoint supports, and are $\nu(f)$-Lipschitz continuous with $\nu(f)=\gamma^{\beta_n(k_\ell(f)+1)}$. Using Equation~\eqref{eq:e_r}, summing over $\ell$ and multiplying by $1/(m+1)$ yields the desired result.
\end{proof}

Let us emphasize that the Lipschitz constants $\nu_r(f)$ in Proposition~\ref{prop:outerlip} depend on $f$ via the integers $k_\ell(f)$. 
This motivates us to introduce a set of continuous functions based on the growth of $k_\ell(f)$ with $\ell$ as follows,
\begin{equation*}
K_C([0,1]^n;\,\R) = \big\{f\in C([0,1]^n;\,\R),\,k_r(f)\leq C,\,r\geq1\big\},
\end{equation*}
for some constant $C>0$, where $C([0,1]^n;\,\R)$ denotes the set of multivariate continuous functions, and for given $n\geq 2$, $m\geq n$, $\gamma\geq m+2$, $\delta$ and $\eta$.
A direct calculation shows that functions in this set have outer functions whose Lipschitz constants \eqref{eq:nu_r} satisfy
\begin{align}\label{eq:nu_r_K_C}
\nu_r(f) \leq \frac{\Vert f\Vert_{L^\infty([0,1]^n)}}{m+1}r\gamma^{2n^C}.
\end{align}

\section{Approximation of the inner and outer functions by very deep ReLU networks}\label{sec:innerouter}

Let $\omega:[0,\infty)\rightarrow[0,\infty)$ be a function that is vanishing and continuous at $0$, \textit{i.e.}, 
$\lim_{\delta\rightarrow0+}\omega(\delta)=\omega(0)=0$, and $B\subset\R^d$ be a compact domain.
We say that an uniformly continuous function $f:B\rightarrow\R$ has modulus of continuity $\omega$ if and only if
\begin{align*}
\vert f(\bs{x})-f(\bs{x}')\vert\leq\omega(\Vert\bs{x}-\bs{x}'\Vert_2),\quad\forall\bs{x},\bs{x}'\in B.
\end{align*}

Many classical estimates in approximation theory are based on moduli of continuity. For example, best degree-$d$ polynomial approximation of continuous functions of one variable with modulus of continuity $\omega$ yields $\mathcal{O}(\omega(d^{-1}))$ errors \cite[Thm.~3.9]{gil2007}. The $\mathcal{O}(\omega(d^{-1/n}))$ errors in dimension $n$ suffers from the curse of dimensionality, but matches the lower bound obtained by nonlinear widths \cite[Thm.~4.2]{devore1989}.

In neural network approximation, moduli of continuity appear in the work of Yarotsky. In 2018, he proved that very deep ReLU networks of depth $L=\OO(W)$ and size $W$ generate $\mathcal{O}(\omega(\OO(W^{-2/n})))$ errors \cite[Thm.~2]{yarotsky2018}. This result matches the lower bound based on VC dimension of Anthony and Barlett \cite[Thm.~8.7]{anthony2009} (see also \cite{harvey2017}), and improves the $\mathcal{O}(W^{-1/n}\log_2^{1/n}W)$ errors he obtained for Lipschitz functions in 2017 \cite[Thm.~1]{yarotsky2017}. 

Let us stress that Yarotsky's theorems provide upper bounds for the errors when the same network architecture is used to approximate all functions in a given function space. In other words, the network architecture does not depend on the function being approximated in that space; only the weights do. {Moreover, the networks he utilizes are said to be \textit{very deep} because the depth $L$ satisfies $L=\OO(W)$.} We recall his 2018 result below.

\begin{theorem}[Approximation of continuous functions by very deep ReLU networks in the unit hypercube]\label{thm:yarotsky}
For any continuous function $f:[0,1]^n\rightarrow\R$ with modulus of continuity $\omega_f$,
there is a deep ReLU network $\wff$ depth $L\leq c_0(n)W$ and size $W$, such that
\begin{align*}
\Vert f - \wff\Vert_{L^\infty([0,1]^n)}\leq c_1(n)\omega_f\left(c_2(n)W^{-2/n}\right),
\end{align*}
for some $c_0(n),c_1(n),c_2(n)>0$.
\end{theorem}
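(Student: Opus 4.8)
The plan is to prove the estimate by approximating $f$ by a piecewise-constant quantization on a uniform grid and then realizing that quantization with a very deep ReLU network, exploiting the fact that when the depth $L$ is allowed to grow linearly with the size $W$ the network has effective capacity of order $WL\sim W^2$ rather than $W$. This $W^2$ capacity is exactly what is needed to match the VC-dimension lower bound of Anthony and Bartlett mentioned above.

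First I would partition $[0,1]^n$ into $K^n$ congruent sub-cubes of side $1/K$ and replace $f$ on each sub-cube by its value at a fixed representative point of that cube. Since every sub-cube has diameter $\sqrt n/K$, the resulting piecewise-constant target $\bar f$ obeys $\Vert f-\bar f\Vert_{L^\infty([0,1]^n)}\le\omega_f(\sqrt n/K)$. Matching the desired rate then fixes the calibration $K^n\asymp W^2$, i.e. $K\asymp W^{2/n}$, so that $\sqrt n/K\asymp\sqrt n\,W^{-2/n}$ and the argument of $\omega_f$ becomes $c_2(n)W^{-2/n}$ with $c_2(n)=\sqrt n$; the entire approximation budget thus reduces to representing the $K^n$ prescribed values of $\bar f$ with only $W\asymp K^{n/2}$ weights.

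I would build this representation as a composition of two sub-networks. The first computes, from $\bs{x}\in[0,1]^n$, an integer cube-address $a(\bs{x})\in\{0,\dots,K^n-1\}$: each coordinate is mapped to its base-$K$ digit by a shallow stack of ReLU thresholds, and the digits are combined by a fixed linear map. The second is a value-decoder realizing the $W^2$ gain. I would concatenate the $K^n$ quantized values (each rounded to a precision matched to $\omega_f(\sqrt n/K)$) into one long binary string, split it into $M\asymp K^{n/2}$ blocks, and store each block in the bits of a single real weight $z_0,\dots,z_{M-1}$. Writing $a=pM+q$, a shallow selector reads off $z_p$ (one of $M\asymp W$ weights) and a depth-$\OO(M)=\OO(W)$ cascade of shift-and-threshold (bit-extraction) layers peels the $q$-th value out of $z_p$. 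Because the selector has width $\OO(1)$ and the cascade contributes $\OO(M)$ weights over $\OO(M)$ layers, the whole network $\wff$ has size $W$ and depth $L\le c_0(n)W$, and its output differs from $\bar f$ by at most the chosen rounding precision.

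The main obstacle is making the decoder a genuine ReLU construction with controlled error rather than an idealized lookup. Two points need care. First, ReLU networks are continuous, so neither $\bar f$ nor the address map can be reproduced exactly; the unavoidable transitions are confined to thin layers straddling the cube boundaries, where the network interpolates continuously between the stored values of adjacent cubes. Since those values come from points at distance $\OO(1/K)$, they differ by $\OO(\omega_f(\sqrt n/K))$, so the interpolation --- followed by a final clipping through $\sigma$ --- stays within the target bound and accounts for the prefactor $c_1(n)$. Second, the bit-extraction cascade must peel bits without amplifying round-off: each layer must be an exact affine-plus-ReLU operation on a correctly scaled binary expansion, and the stored precision must be matched to $\omega_f(\sqrt n/K)$ so that value quantization stays below the approximation error. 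Tracking how this precision, the block size $M$, and the cascade depth interact to produce clean $n$-dependent constants $c_0,c_1,c_2$ is the delicate bookkeeping at the heart of the proof.
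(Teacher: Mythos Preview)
The paper does not prove this theorem at all: it is quoted as Yarotsky's 2018 result \cite[Thm.~2]{yarotsky2018} and stated without proof. The only thing the paper proves in this vicinity is the trivial rescaling to $[0,M]^n$ in Corollary~\ref{cor:yarotsky}. So there is no ``paper's own proof'' to compare against; your proposal is an attempted reconstruction of Yarotsky's argument, not of anything in this paper.

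As such a reconstruction, your sketch has the right architecture --- grid quantization, calibration $K^n\asymp W^2$, and a bit-extraction cascade that exploits the $WL\sim W^2$ capacity of very deep networks --- but there is a genuine gap in the boundary treatment. You claim that in the thin transition layers ``the network interpolates continuously between the stored values of adjacent cubes,'' and that those values differ by $\OO(\omega_f(\sqrt n/K))$. That is not what your decoder does. When coordinate $x_i$ crosses a grid line, one base-$K$ digit of the address moves continuously from $d$ to $d+1$, so the address $a(\bs{x})$ sweeps through an entire real interval of length $K^{i-1}$. Along the way the selector-plus-cascade produces whatever its bit-peeling arithmetic happens to emit for non-integer inputs, and even at the integer waypoints it reads off stored values belonging to cubes that are \emph{not} neighbours of $\bs{x}$ in $[0,1]^n$. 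Nothing in your construction bounds this garbage by $\omega_f(\sqrt n/K)$, and the final clipping through $\sigma$ cannot repair it.

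Yarotsky's actual construction handles this with additional structure you are missing: the values of $f$ are first quantized to multiples of $\omega_f(c/K)$, so that cubes adjacent in space carry integer labels differing by at most one, and the encoding is arranged so that the extractor sees only exact integer addresses away from a controlled set, with the transition behaviour forced to stay between the two neighbouring stored values. Without some device of this kind --- either a slowly-varying encoding along a path through the grid, or a separate mechanism that hard-quantizes the input before it reaches the decoder --- your argument does not deliver an $L^\infty$ bound. You should consult \cite{yarotsky2018} directly for the missing ingredient.
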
 

We extend Yarotsky's result to domains $[0,M]^n$.

\begin{corollary}[Approximation of continuous functions by very deep ReLU networks in scaled hypercubes]\label{cor:yarotsky}
For any continuous function $f:[0,M]^n\rightarrow\R$ with modulus of continuity $\omega_f$,
there is a deep ReLU network $\wff$ of depth $L\leq c_0(n)W$ and size $W$, such that
\begin{align*}
\Vert f - \wff\Vert_{L^\infty([0,M]^n)}\leq c_1(n)\omega_f\left(c_2(n)M W^{-2/n}\right),
\end{align*}
with $c_0(n),c_1(n),c_2(n)$ as in Theorem~\ref{thm:yarotsky}.
\end{corollary}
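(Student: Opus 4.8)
The plan is to reduce the scaled problem to the unit-hypercube case already settled in Theorem~\ref{thm:yarotsky} by a simple linear change of variables. First I would define the rescaled function $g:[0,1]^n\rightarrow\R$ by $g(\bs{y}) = f(M\bs{y})$, so that $f(\bs{x}) = g(\bs{x}/M)$ for all $\bs{x}\in[0,M]^n$. The point of this substitution is that $g$ lives on the unit hypercube, where Yarotsky's theorem applies directly, while $f$ is recovered from $g$ by a diagonal rescaling of the input.

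Next I would relate the modulus of continuity of $g$ to that of $f$. For any $\bs{y},\bs{y}'\in[0,1]^n$,
\begin{align*}
\vert g(\bs{y}) - g(\bs{y}')\vert = \vert f(M\bs{y}) - f(M\bs{y}')\vert \leq \omega_f\left(M\Vert\bs{y}-\bs{y}'\Vert_2\right),
\end{align*}
so $g$ has modulus of continuity $\omega_g(\delta) = \omega_f(M\delta)$. Applying Theorem~\ref{thm:yarotsky} to $g$ then produces a deep ReLU network $\wgg$ of depth $L\leq c_0(n)W$ and size $W$ with
\begin{align*}
\Vert g - \wgg\Vert_{L^\infty([0,1]^n)} \leq c_1(n)\omega_g\left(c_2(n)W^{-2/n}\right) = c_1(n)\omega_f\left(c_2(n)MW^{-2/n}\right),
\end{align*}
which is already the bound we want, with exactly the constants asserted in the statement.

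Finally I would turn $\wgg$ into a network $\wff$ that approximates $f$ on $[0,M]^n$ by composing with the rescaling $\bs{x}\mapsto\bs{x}/M$, that is, $\wff(\bs{x}) = \wgg(\bs{x}/M)$. The key observation is that this diagonal linear map can be absorbed into the weights joining the input nodes to the first hidden layer of $\wgg$, so that neither the depth $L$ nor the size $W$ changes. Since $\bs{x}/M$ ranges over $[0,1]^n$ exactly as $\bs{x}$ ranges over $[0,M]^n$, we obtain $\Vert f - \wff\Vert_{L^\infty([0,M]^n)} = \Vert g - \wgg\Vert_{L^\infty([0,1]^n)}$, and the desired estimate follows at once.

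I do not expect a serious obstacle here; the argument is essentially a normalization. The only point requiring care is the bookkeeping in the last step, namely verifying that folding the factor $1/M$ into the first layer leaves the architecture intact, so that the same $c_0(n),c_1(n),c_2(n)$ carry over unchanged. The role of $M$ in the error bound enters solely through the scaled argument $c_2(n)MW^{-2/n}$ of $\omega_f$, as anticipated.
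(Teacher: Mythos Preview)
Your proposal is correct and follows essentially the same rescaling argument as the paper: define $g(\bs{y})=f(M\bs{y})$ on $[0,1]^n$, apply Theorem~\ref{thm:yarotsky} to $g$, and then set $\wff(\bs{x})=\wgg(\bs{x}/M)$. If anything, your version is slightly more careful than the paper's, since you explicitly note that the diagonal factor $1/M$ can be absorbed into the first-layer weights without altering depth or size.
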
 

\begin{proof}
We use Theorem~\ref{thm:yarotsky} with $g(x)=f(x/M)$ on $[0,1]^n$. Note that $\omega_g(\delta)=\omega_f(M\delta)$. Therefore, there is a deep ReLU network $\wgg$ of depth $L\leq c_0(n)W$ and size $W$, such that
\begin{align*}
\Vert g - \wgg\Vert_{L^\infty([0,1]^n)}
& \leq c_1(n)\omega_g\left(c_2(n)W^{-2/n}\right), \\
& = c_1(n)\omega_f\left(c_2(n)M W^{-2/n}\right),
\end{align*}
with $c_0(n),c_1(n),c_2(n)$ as in Theorem~\ref{thm:yarotsky}. Since $g(Mx) - \wgg(Mx) = f(x) - \wgg(Mx)$, the network $\wff(x)=\wgg(Mx)$ satisfies all requirements in this corollary.
\end{proof}

We shall now apply Corollary~\ref{cor:yarotsky} to the inner and outer functions of Propositions~\ref{prop:innercon} and \ref{prop:outercon}. For simplicity, we shall assume, throughout the rest of the paper, that $m=2n$ and $\gamma=2n+2$.

\begin{proposition}[Approximation of the inner function by very deep ReLU networks]\label{prop:innerapprox}
Let $n\geq 2$ be an integer and $\psi$ be the inner function defined in Proposition~\ref{prop:innercon}.
Then, for any scalar $0<\epsilon<1$, there is a deep ReLU network $\wpsi$ that has depth $L\leq c_0(1)W$ and size
\begin{align*}
W\leq c_3(n)\epsilon^{-[1+\log_2(n+1)]/2},
\end{align*}
such that $\Vert\psi - \wpsi\Vert_{L^\infty([0,2])}\leq\epsilon$, with
\begin{align}\label{eq:c_3}
c_3(n) = \left[(2n+5)c_1(1)\right]^{[1+\log_2(n+1)]/2}c_2(1)^{1/2},
\end{align}
and $c_0(1),c_1(1),c_2(1)$ as in Theorem~\ref{thm:yarotsky}.
\end{proposition}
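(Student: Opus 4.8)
The plan is to apply Corollary~\ref{cor:yarotsky} directly to $\psi$, viewed as a continuous function of a single variable on the scaled interval $[0,2]$, and then to optimize the network size so that the resulting error matches $\epsilon$. The only input needed about $\psi$ is its modulus of continuity, which Proposition~\ref{prop:innerhold} supplies: since $\psi$ is $(\nu,\alpha)$-H\"older continuous, its modulus of continuity is $\omega_\psi(\delta)=\nu\delta^\alpha$ with $\nu=2^{-\alpha}(\gamma+3)$ and $\alpha=\log_\gamma2$.

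First I would invoke Corollary~\ref{cor:yarotsky} with $n=1$ and $M=2$. This yields, for any chosen size $W$, a deep ReLU network $\wpsi$ of depth $L\le c_0(1)W$ such that
\begin{align*}
\Vert\psi-\wpsi\Vert_{L^\infty([0,2])}\le c_1(1)\,\omega_\psi\!\left(2c_2(1)W^{-2}\right)=c_1(1)\,\nu\,\bigl(2c_2(1)\bigr)^{\alpha}\,W^{-2\alpha}.
\end{align*}
Next I would specialize to the standing assumption $\gamma=2n+2$, so that $\alpha=1/[1+\log_2(n+1)]$ and $\nu=2^{-\alpha}(2n+5)$, and require the right-hand side to be at most $\epsilon$. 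Solving $c_1(1)\,\nu\,(2c_2(1))^\alpha W^{-2\alpha}\le\epsilon$ for $W$ gives
\begin{align*}
W\ge\left(c_1(1)\,\nu\right)^{1/(2\alpha)}\bigl(2c_2(1)\bigr)^{1/2}\,\epsilon^{-1/(2\alpha)}.
\end{align*}

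The final step is pure bookkeeping of constants. Substituting $\nu=2^{-\alpha}(2n+5)$ makes the factor $(2^{-\alpha})^{1/(2\alpha)}=2^{-1/2}$ cancel exactly against the $2^{1/2}$ hidden in $(2c_2(1))^{1/2}$, leaving
\begin{align*}
W=\left[(2n+5)c_1(1)\right]^{1/(2\alpha)}c_2(1)^{1/2}\,\epsilon^{-1/(2\alpha)}.
\end{align*}
Since $1/(2\alpha)=[1+\log_2(n+1)]/2$, this is precisely $W=c_3(n)\epsilon^{-[1+\log_2(n+1)]/2}$ with $c_3(n)$ as in Equation~\eqref{eq:c_3}, while the depth bound $L\le c_0(1)W$ is inherited directly from the $n=1$ instance of Theorem~\ref{thm:yarotsky}. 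I expect no conceptual obstacle; the only point requiring care is the constant accounting—in particular verifying that the $2^{\pm1/2}$ factors cancel so that the H\"older constant $\nu$ collapses cleanly into the factor $(2n+5)$—together with, if one insists on integer sizes, absorbing the rounding of $W$ up to $\lceil W\rceil$ into $c_3(n)$, which is harmless for $0<\epsilon<1$.
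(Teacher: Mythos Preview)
Your proposal is correct and follows exactly the same approach as the paper: apply Corollary~\ref{cor:yarotsky} with $n=1$, $M=2$, and the H\"older modulus $\omega_\psi(\delta)=\nu\delta^\alpha$ from Proposition~\ref{prop:innerhold}, then solve for $W$. The paper's proof is in fact a one-line citation of this corollary and modulus; your version simply spells out the constant bookkeeping (including the $2^{\pm1/2}$ cancellation), which is precisely what is needed to recover the stated $c_3(n)$.
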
 

\begin{proof}
We use Corollary~\ref{cor:yarotsky} with $M=2$ and the modulus of continuity of Proposition~\ref{prop:innerhold}, \textit{i.e.},
\begin{align*}
\omega_\psi(\delta) = \nu\delta^\alpha,
\end{align*}
with $\nu=2^{-\alpha}(2n+5)$ and $\alpha=\log_{2n+2}2$.
\end{proof}

\begin{proposition}[Approximation of the outer functions by very deep ReLU networks]\label{prop:outerapprox}
Let $n\geq 2$ be an integer, $f:[0,1]^n\rightarrow\R$ be a continuous function in $ K_C([0,1]^n;\,\R)$ that satisfies $\Vert f\Vert_{L^\infty([0,1]^n)}\leq1$, and $\phi_j^r$ be the $(2n+1)$ outer functions defined in Proposition~\ref{prop:outercon} at iteration $r$, for some $r\geq1$. Then, for any scalar $0<\epsilon<1$, there are $(2n+1)$ deep ReLU networks $\wphi_j^r$ that have depth $L\leq c_0(1)W$ and size
\begin{align*}
W\leq c_4(n,r)\epsilon^{-1/2},
\end{align*}
such that $\Vert\phi_j^r - \wphi_j^r\Vert_{L^\infty([0,2\frac{\gamma-1}{\gamma-2}])}\leq\epsilon$, with
\begin{align}\label{eq:c_4}
c_4(n,r) = \left[\frac{c_1(1)c_2(1)}{n}r (2n+2)^{2n^{C}}\right]^{1/2},
\end{align}
and $c_0(1),c_1(1),c_2(1)$ as in Theorem~\ref{thm:yarotsky}.
\end{proposition}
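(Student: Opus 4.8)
The plan is to mirror exactly the structure of the proof of Proposition~\ref{prop:innerapprox}: apply Corollary~\ref{cor:yarotsky} to each outer function $\phi_j^r$, using the Lipschitz bound established for functions in $K_C([0,1]^n;\R)$, and then solve for the size $W$ that achieves error $\epsilon$. The key input is that each $\phi_j^r$ is Lipschitz continuous, which is precisely Proposition~\ref{prop:outerlip}. Since $\phi_j^r$ is a function of one variable, I would invoke Corollary~\ref{cor:yarotsky} in dimension $n=1$, which is why the constants appearing in the statement are $c_0(1),c_1(1),c_2(1)$ and the exponent on $W$ is $-2/1=-2$ inside the modulus rather than $-2/n$.

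First I would record the modulus of continuity of $\phi_j^r$. Because $\phi_j^r$ is $\nu_r(f)$-Lipschitz, its modulus of continuity is $\omega_{\phi_j^r}(\delta)=\nu_r(f)\,\delta$. Next I would use the hypothesis $f\in K_C([0,1]^n;\R)$ together with $\Vert f\Vert_{L^\infty([0,1]^n)}\leq 1$ to bound $\nu_r(f)$ via Equation~\eqref{eq:nu_r_K_C}, giving
\begin{align*}
\nu_r(f)\leq\frac{1}{m+1}\,r\,\gamma^{2n^{C}}=\frac{r}{2n+1}\,(2n+2)^{2n^{C}},
\end{align*}
where I have substituted the running assumptions $m=2n$ and $\gamma=2n+2$. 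The outer functions have domain $[0,2\frac{\gamma-1}{\gamma-2})$ by Proposition~\ref{prop:outerlip}, so I would take the scaled hypercube length to be $M=2\frac{\gamma-1}{\gamma-2}$ in Corollary~\ref{cor:yarotsky} (in dimension one). This yields a network $\wphi_j^r$ of depth $L\leq c_0(1)W$ and size $W$ with
\begin{align*}
\Vert\phi_j^r-\wphi_j^r\Vert_{L^\infty}\leq c_1(1)\,\omega_{\phi_j^r}\!\left(c_2(1)\,M\,W^{-2}\right)=c_1(1)\,\nu_r(f)\,c_2(1)\,M\,W^{-2}.
\end{align*}

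Finally I would set the right-hand side equal to $\epsilon$ and solve for $W$, obtaining $W\leq\big[c_1(1)c_2(1)\,M\,\nu_r(f)\big]^{1/2}\epsilon^{-1/2}$; inserting the bound on $\nu_r(f)$ produces an expression of the form $c_4(n,r)\epsilon^{-1/2}$ with the stated $c_4(n,r)$. The only place demanding care is the bookkeeping of the constant $c_4(n,r)$: I expect the main obstacle to be reconciling the factor $M=2\frac{\gamma-1}{\gamma-2}$ and the $1/(m+1)$ from \eqref{eq:nu_r_K_C} with the claimed form $\tfrac{c_1(1)c_2(1)}{n}r(2n+2)^{2n^{C}}$, since $M$ and $m+1=2n+1$ must combine to leave a clean $1/n$ prefactor. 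This is a routine but delicate constant-tracking calculation rather than a conceptual difficulty; the structural argument is identical to the inner-function case.
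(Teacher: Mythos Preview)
Your proposal is correct and follows the paper's proof essentially verbatim: apply Corollary~\ref{cor:yarotsky} in dimension one with $M=2\frac{\gamma-1}{\gamma-2}$ and $\omega_{\phi_j^r}(\delta)=\nu_r(f)\delta$, bound $\nu_r(f)$ via \eqref{eq:nu_r_K_C}, and invert for $W$. The constant bookkeeping you flag as the only concern works out cleanly, since with $\gamma=2n+2$ and $m=2n$ one has $M\cdot\frac{1}{m+1}=2\cdot\frac{2n+1}{2n}\cdot\frac{1}{2n+1}=\frac{1}{n}$, which is exactly the prefactor in $c_4(n,r)$.
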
 

\begin{proof}
We use Corollary~\ref{cor:yarotsky} with $M=2\frac{\gamma-1}{\gamma-2}$ and the modulus of continuity corresponding to the Lipschitz continuity described in Proposition~\ref{prop:outerlip}, \textit{i.e.},
\begin{align*}
\omega_{\phi_j^r}(\delta) = \nu_r(f)\delta,
\end{align*}
with $\nu_r(f)$ as in Equation~\eqref{eq:nu_r_K_C}. This yields
\begin{align*}
\Vert\phi_j^r - \wphi_j^r\Vert_{L^\infty([0,2\frac{\gamma-1}{\gamma-2}])} & \leq 2c_1(1)c_2(1)\nu_r(f)\frac{\gamma-1}{\gamma-2}W^{-2}, \\
& \leq \frac{c_1(1)c_2(1)}{n}r(2n+2)^{2n^{C}}W^{-2},
\end{align*}
where $\wphi_j^r$ is a very deep neural network with size $W$ and depth $L\leq c_0(1)W$, and $c_0(1),c_1(1),c_2(1)$ as in Theorem~\ref{thm:yarotsky}.
To achieve the $\epsilon$ approximation error, $W$ can be as small as $c_4(n,r) \epsilon^{-1/2}$, where
\begin{align*}
c_4(n,r) = \left[\frac{c_1(1)c_2(1)}{n}r (2n+2)^{2n^{Cr}} \right]^{1/2}.
\end{align*}
\end{proof}

\section{Main theorem}\label{sec:theorem}

We present in this section our main theorem about the approximation of multivariate continuous functions by very deep ReLU networks. Our proof is based on the Kolmogorov--Arnold superposition theorem (Theorem~\ref{thm:kast}), and on the approximation of the inner and outer functions by very deep ReLU networks (Propositions~\ref{prop:innerapprox} and \ref{prop:outerapprox}).

\begin{theorem}[Approximation of continuous functions using the Kolmogorov--Arnold superposition theorem]\label{thm:deepkast}
Let $n\geq 2$ be an integer and $f$ be a continuous function in $ K_C([0,1]^n;\,\R)$ that satisfies $\Vert f\Vert_{L^\infty([0,1]^n)}\leq1$. Then, for any scalar $0<\epsilon<1$, there is a deep ReLU network $\wff_r$ that has depth
\begin{align*}
L\leq & c_0(1)\tilde{c}_3(n,r(\epsilon))\epsilon^{-[1+\log_2(n+1)]/2} \\
& + c_0(1)\tilde{c}_4(n,r(\epsilon))\epsilon^{-1/2},
\end{align*}
and size
\begin{align*}
W\leq & n(2n+1)\tilde{c}_3(n,r(\epsilon))\epsilon^{-[1+\log_2(n+1)]/2} \\
& + (2n+1)\tilde{c}_4(n,r(\epsilon))\epsilon^{-1/2},
\end{align*}
such that $\Vert f - \wff_r\Vert_{L^\infty([0,1]^n)}\leq\epsilon$, with $c_0(1)$ as in Theorem~\ref{thm:yarotsky},
\begin{align*}
\tilde{c}_3(n,r(\epsilon)) & \hspace{-0.055cm}=\hspace{-0.055cm}\left[\frac{4n+2}{n}\hspace{-0.05cm}r(\epsilon)(2n+2)^{2n^C}\right]^{[1+\log_2(n+1)]/2}\hspace{-1cm}c_3(n), \\
\tilde{c}_4(n,r(\epsilon)) & \hspace{-0.055cm}=\hspace{-0.055cm}\left[8n+4\right]^{1/2}c_4(n,r(\epsilon)),
\end{align*}
$c_3(n)$ as in Equation~\eqref{eq:c_3}, $c_4(n,r)$ as in Equation~\eqref{eq:c_4}, and $r(\epsilon)=\ceil{\log 2\epsilon^{-1}/\log\eta^{-1}}$.
\end{theorem}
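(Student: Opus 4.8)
The plan is to define the approximating network by substituting, into the finite expansion $f_r$ of Equation~\eqref{eq:f_r}, the network surrogates $\wpsi$ of Proposition~\ref{prop:innerapprox} and $\wphi_j^r$ of Proposition~\ref{prop:outerapprox} in place of the exact $\psi$ and $\phi_j^r$, that is
$$\wff_r(x_1,\ldots,x_n)=\sum_{j=0}^{m}\wphi_j^r\Bigl(\sum_{i=1}^n\lambda_i\wpsi(x_i+ja)\Bigr).$$
First I would split the error by the triangle inequality into a truncation part $\Vert f-f_r\Vert$ and a discretization part $\Vert f_r-\wff_r\Vert$. The truncation part is handled by Equation~\eqref{eq:e_r}: the choice $r(\epsilon)=\ceil{\log 2\epsilon^{-1}/\log\eta^{-1}}$ gives $\eta^{r(\epsilon)}\le\epsilon/2$, whence $\Vert f-f_r\Vert_{L^\infty([0,1]^n)}\le\epsilon/2$ using $\Vert f\Vert_{L^\infty([0,1]^n)}\le1$. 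The remaining budget $\epsilon/2$ is then allocated to the network approximation.

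For the discretization part, the central step is to control, for each $0\le j\le m$, the difference $\phi_j^r(\xi_j)-\wphi_j^r(\tilde\xi_j)$ with $\xi_j=\sum_i\lambda_i\psi(x_i+ja)$ and $\tilde\xi_j=\sum_i\lambda_i\wpsi(x_i+ja)$. Inserting the intermediate value $\phi_j^r(\tilde\xi_j)$, I would write this as $[\phi_j^r(\xi_j)-\phi_j^r(\tilde\xi_j)]+[\phi_j^r(\tilde\xi_j)-\wphi_j^r(\tilde\xi_j)]$: the first bracket is bounded via the Lipschitz continuity of $\phi_j^r$ (Proposition~\ref{prop:outerlip}) together with $\vert\xi_j-\tilde\xi_j\vert\le(\sum_i\lambda_i)\Vert\psi-\wpsi\Vert_{L^\infty([0,2])}$, the second by $\Vert\phi_j^r-\wphi_j^r\Vert_{L^\infty}$. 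Summing over the $m+1=2n+1$ values of $j$ gives
$$\Vert f_r-\wff_r\Vert_{L^\infty([0,1]^n)}\le(2n+1)\Bigl[\nu_r(f)\Bigl(\sum_{i=1}^n\lambda_i\Bigr)\Vert\psi-\wpsi\Vert+\Vert\phi_j^r-\wphi_j^r\Vert\Bigr].$$
I would then demand that the inner-propagation term and the outer-approximation term each be at most $\epsilon/4$. With $B=(2n+1)\nu_r(f)(\sum_i\lambda_i)$, and using $\nu_r(f)\le\frac{r(2n+2)^{2n^C}}{2n+1}$ from Equation~\eqref{eq:nu_r_K_C} together with $\sum_i\lambda_i<\frac{2n+1}{2n}$, one finds $4B=\frac{4n+2}{n}r(2n+2)^{2n^C}$; this fixes the admissible target errors $\Vert\psi-\wpsi\Vert=\epsilon/(4B)$ and $\Vert\phi_j^r-\wphi_j^r\Vert=\epsilon/(4(2n+1))$.

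Feeding these target errors into Propositions~\ref{prop:innerapprox} and \ref{prop:outerapprox} produces the per-network sizes $W_\psi\le c_3(n)(4B/\epsilon)^{[1+\log_2(n+1)]/2}=\tilde c_3(n,r(\epsilon))\epsilon^{-[1+\log_2(n+1)]/2}$ and $W_\phi\le c_4(n,r)(4(2n+1)/\epsilon)^{1/2}=\tilde c_4(n,r(\epsilon))\epsilon^{-1/2}$, recovering exactly the stated constants since $[4(2n+1)]^{1/2}=[8n+4]^{1/2}$. Finally, I would assemble the architecture: the expansion requires $n(2n+1)$ instances of $\wpsi$ (one per pair $(i,j)$, since the argument $x_i+ja$ differs) feeding, through the fixed linear combination with weights $\lambda_i$, into $2n+1$ instances of $\wphi_j^r$, followed by the summation over $j$. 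As the inner instances run in parallel and the outer instances run in parallel, the depth adds only across the two composed stages, giving $L\le c_0(1)(W_\psi+W_\phi)$, whereas the size adds over all instances, giving $W\le n(2n+1)W_\psi+(2n+1)W_\phi$; both bounds match the theorem.

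The main obstacle I anticipate is the error-propagation step through the composition: it is essential to use the Lipschitz constant of the \emph{exact} outer function $\phi_j^r$ rather than that of its surrogate, which is precisely why the intermediate insertion of $\phi_j^r(\tilde\xi_j)$ is needed to keep the bound clean. A secondary technical point is that $\tilde\xi_j$ must lie in the domain $[0,2\frac{\gamma-1}{\gamma-2})$ of $\phi_j^r$; since $\wpsi$ only approximates $\psi$, values could be displaced slightly outside, so one must either verify that the surrogate respects the range $[0,2)$ or extend $\phi_j^r$ Lipschitz-continuously beyond its domain before invoking Proposition~\ref{prop:outerlip}.
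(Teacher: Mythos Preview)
Your proposal is correct and follows essentially the same route as the paper: the same network architecture, the same triangle-inequality split $\Vert f-f_r\Vert+\Vert f_r-\wff_r\Vert$, the same intermediate insertion $\phi_j^r(\tilde\xi_j)$ to exploit the Lipschitz bound of Proposition~\ref{prop:outerlip}, and the same choices of $\epsilon_\psi$, $\epsilon_\phi$, $r(\epsilon)$ leading to the stated constants and size/depth bounds. Your final remark about ensuring $\tilde\xi_j$ stays in the domain of $\phi_j^r$ is a legitimate technical point that the paper's proof does not address explicitly.
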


\begin{proof}
Let $0<\epsilon<1$ be a scalar. Let $f:[0,1]^n\rightarrow\R$ be a continuous function in $ K_C([0,1]^n;\,\R)$ that satisfies $\Vert f\Vert_{L^\infty([0,1]^n)}\leq1$. 
Using Equation~\eqref{eq:kast} in Theorem~\ref{thm:kast}, we write $f$ as
\begin{align*}
f(x_1,\ldots,x_n) = \sum_{j=0}^{2n}\phi_j\left(\sum_{i=1}^n\lambda_i\psi(x_i+ja)\right).
\end{align*}
We first approximate $f$ by $f_r$ defined in Equation~\eqref{eq:f_r} using the error bound in Proposition~\ref{prop:outerapprox}, i.e.,
\begin{align*}
f_r(x_1,\ldots,x_n) = \sum_{j=0}^{2n}\phi_j^r\left(\sum_{i=1}^n\lambda_i\psi(x_i+ja)\right).
\end{align*}
If we choose $r(\epsilon)=\ceil{\log 2\epsilon^{-1}/\log\eta^{-1}}$, then using Equation~\eqref{eq:e_r}, we get $\Vert f - f_r\Vert_{L^\infty([0,1]^n)}\leq\epsilon/2$.

We now approximate $f_r$ by a deep ReLU network $\wff_r$ defined by
\begin{align}\label{eq:network}
\wff_r(x_1,\ldots,x_n) = \sum_{j=0}^{2n}\wphi_j^r\left(\sum_{i=1}^n\lambda_i\wpsi(x_i+ja)\right),
\end{align}
where $\wpsi$ and $\wphi_j^r$ approximate $\psi$ and $\phi_j^r$ to some accuracies $0<\epsilon_{\psi}<1$ and $0<\epsilon_{\phi}<1$ to be determined later. We plot the subnetwork $\wphi_j^r$ in Figure~\ref{fig:network}.

Using Propositions~\ref{prop:innerapprox} and \ref{prop:outerapprox}, the network $\wpsi$ has depth $L_{\psi}\leq c_0(n)W_{\psi}$ and size
\begin{align*}
W_{\psi}\leq c_3(n)\epsilon_{\psi}^{-[1+\log_2(n+1)]/2},
\end{align*}
while the networks $\wphi_j^r$ have depth $L_{\phi}\leq c_0(n)W_\phi$ and size
\begin{align*}
W_{\phi}\leq c_4(n,r)\epsilon_{\phi}^{-1/2}.
\end{align*}

\begin{figure}[t]
\centering
\begin{tikzpicture}
[   cnode/.style={draw=black, fill=#1, minimum width=3mm, circle},
]

    \node[cnode = black, label = 270: $x_1$] (x_1) at (0, -3) {};
    \node[cnode = black, label = 270: $x_2$] (x_2) at (1, -3) {};
    \node at (2, -3) {$\ldots$};
    \node at (3, -3) {$\ldots$};
    \node[cnode = black, label = 270: $x_n$] (x_n) at (4, -3) {};
   
    \node[cnode = gray, label = 180: $\wpsi(x_1+ja)$] (psi_1) at (0, -1) {};
    \node[cnode = gray] (psi_2) at (1, -1) {};
    \node at (2, -1) {$\ldots$};
    \node at (3, -1) {$\ldots$};
    \node[cnode = gray, label = 360: $\wpsi(x_n+ja)$] (psi_n) at (4, -1) {};
    \draw (x_1) -- (psi_1) {};
    \draw (x_2) -- (psi_2) {};
    \draw (x_n) -- (psi_n) {};
    
    \node[cnode = gray, label = 180: $\dsp\sum_{i=1}^n\lambda_i\wpsi(x_i+ja)$] (psi_sum) at (2, 0) {};
    \draw (psi_1) -- (psi_sum) {};
    \draw (psi_2) -- (psi_sum) {};
    \draw (psi_n) -- (psi_sum) {};
    \node[cnode = black, label = 180: $\dsp\wphi_j^r\left(\sum_{i=1}^n\lambda_i\wpsi(x_i+ja)\right)$] (phi) at (2, 2) {};
    \draw (psi_sum) -- (phi) {};
    
    \node[draw=none] (ghost1) at (1.5, -3) {};
    \node[draw=none] (ghost2) at (1.5, -1) {};
    \draw [thick, <->] (ghost1) -- node[right]{$L_\psi\leq c_0(1)W_\psi$} (ghost2);
    \node[draw=none] (ghost3) at (2.5, 0) {};
    \node[draw=none] (ghost4) at (2.5, 2) {};
    \draw [thick, <->] (ghost3) -- node[right]{$L_\phi\leq c_0(1)W_\phi$} (ghost4);
    
\end{tikzpicture}
\caption{\textit{Subnetwork $\wphi_j^r$ that approximates the outer function $\phi_j^r$. The deep ReLU network in Equation~\eqref{eq:network} is the sum of $2n+1$ such subnetworks. Each subnetwork has depth $L_\psi+L_\phi$ and size $nW_\psi+W_\phi$, so that the network in Equation~\eqref{eq:network} has depth $L_\psi+L_\phi$ and size $(2n^2+n)W_\psi+(2n+1)W_\phi$.}}
\label{fig:network}
\end{figure}

Using the triangle inequality, we compute the accuracy of the network $\wff_r$ as follows,
\begin{align*}
& \vert f_r(x_1,\ldots,x_n) - \wff_r(x_1,\ldots,x_n)\vert, \\
\leq & \left\vert\sum_{j=0}^{2n}\phi_j^r\left(\sum_{p=1}^n\lambda_i\psi(x_i+ja)\right)-\sum_{j=0}^{2n}\phi_j^r\left(\sum_{i=1}^n\lambda_i\wpsi(x_i+ja)\right)\right\vert \\ 
+ & \left\vert\sum_{j=0}^{2n}\phi_j^r\left(\sum_{i=1}^n\lambda_i\wpsi(x_i+ja)\right)-\sum_{j=0}^{2n}\wphi_j^r\left(\sum_{i=1}^n\lambda_i\wpsi(x_i+ja)\right)\right\vert, \\
\leq & \frac{(2n+1)^2}{2n}\nu_r(f)\epsilon_\psi + (2n+1)\epsilon_{\phi}.
\end{align*}
We must choose
\begin{align*}
\epsilon_\psi = \frac{n\epsilon}{2(2n+1)^2\nu_r(f)}, \quad \epsilon_{\phi} = \frac{\epsilon}{4(2n+1)},
\end{align*}
to obtain $\Vert f_r - \wff_r\Vert_{L^\infty([0,1]^n)}\leq\epsilon/2$ and $\Vert f - \wff_r\Vert_{L^\infty([0,1]^n)}\leq\epsilon$.

Therefore, the network $\wpsi$ has depth $L_{\psi}\leq c_0(n)W_{\psi}$ and size
\begin{align*}
W_{\psi} \leq \tilde{c}_3(n,\epsilon)\epsilon^{-[1+\log_2(n+1)]/2}, 
\end{align*}
with
\begin{align*}
\tilde{c}_3(n,r(\epsilon)) \hspace{-0.055cm}=\hspace{-0.055cm}\left[\frac{4n+2}{n}r(\epsilon)(2n+2)^{2n^C}\right]^{[1+\log_2(n+1)]/2}\hspace{-1cm}c_3(n),
\end{align*}
while the networks $\wphi_j^r$ have depth $L_{\phi}\leq c_0(n)W_\phi$ and size
\begin{align*}
W_{\phi} \leq \tilde{c}_4(n,r(\epsilon))\epsilon^{-1/2}, 
\end{align*}
with
\begin{align*}
\tilde{c}_4(n,r(\epsilon)) & = \left[8n+4\right]^{1/2}c_4(n,r(\epsilon)).
\end{align*}
Lastly, the network $\wff_r$ has depth $L\leq c_0(1)(W_\psi+W_\phi)$ and size $W\leq n(2n+1)W_\psi+(2n+1)W_\phi$.
\end{proof}

The upper bounds in Theorem~\ref{thm:deepkast} show that, for a given dimension $n$, the depth and the size of the network grow like $\OO\left(\epsilon^{-\log n}\right)$; the curse of dimensionality is lessened asymptotically when $\epsilon$ approaches $0$.

{Let us end this section with a comment about smoothness. Yarotsky proved in 2017 that deep ReLU networks of depth and size $\OO(\epsilon^{-n/m})$ can approximate functions with $m$ weak and bounded derivatives in $[0,1]^n$ to accuracy $\epsilon$} \cite[Thm.~1]{yarotsky2017} {(we omitted a logarithmic factor for simplicity). For given $n$ and large enough $m$, $\OO(\epsilon^{-n/m})$ may be smaller than $\OO(\epsilon^{-\log n})$. Conversely, however, for given $m$ and large enough $n$, $\OO(\epsilon^{-n/m})$ may be greater than $\OO(\epsilon^{-\log n})$.

\section{Discussion}

We have proven upper bounds for the approximation of multivariate functions $f:[0,1]^n\rightarrow\R$ by deep ReLU networks, for which the curse of dimensionality is lessened. The depth and the size of the networks to approximate such functions $f$ grow like $\OO(\epsilon^{-\log n})$, as opposed to $\OO(\epsilon^{-n})$. The proof is based on the ability of very deep ReLU networks to implement the Kolmogorov--Arnold superposition theorem.

There are many ways in which {this work could be fruitfully continued.} If we were able to construct a Lipschitz continuous inner function, we would be able to obtain $\OO(\epsilon^{-1})$ estimates. Actor and Knepley designed in 2017 an algorithm to compute a Lipschitz continuous inner function, but they did not provide a method to compute the outer functions \cite{actor2017}.

From a theoretical point of view, it would be interesting to investigate error bounds for deep networks with other activation functions (\textit{e.g.}, sigmoid and tanh). Some results about approximation by deep networks using smooth activation functions can be found in} \cite{poggio2017} and the references therein. Using these results, it would be possible to derive an analogue of Theorem~\ref{thm:deepkast}. Let us highlight that, from a numerical point of view, only the ReLU activation function (combined with other tricks) can avoid the gradient degeneracy during network training.


\section*{Acknowledgements}

The research of the second author is supported by the start-up grant of the Department of Mathematics at the National University of Singapore and by the Ministry of Education in Singapore under the grant MOE2018-T2-2-147.

\bibliography{/Users/hangocquyen/Dropbox/HM/WORK/ARTICLES_BOOKS/BIB_FILES/references.bib}

\end{document}